\def\NZQ{\mathbb}               
\def\NN{{\NZQ N}}
\def\RR{{\NZQ R}}
\def\F2{{\NZQ F}_2}
\def\mm{{\frk m}}
\def\opn#1#2{\def#1{\operatorname{#2}}} 
\opn\chara{char} \opn\length{\ell} \opn\pd{pd} \opn\rk{rk}
\opn\projdim{proj\,dim} \opn\injdim{inj\,dim} \opn\rank{rank}
\opn\depth{depth} \opn\codepth{codepth} \opn\grade{grade}
\opn\height{height} \opn\embdim{emb\,dim} \opn\codim{codim}
\opn\Tr{Tr} \opn\bigrank{big\,rank}
\opn\superheight{superheight}\opn\lcm{lcm}
\opn\trdeg{tr\,deg}%
\opn\reg{reg} \opn\lreg{lreg} \opn\skel{skel}
\opn\Gr{Gr}
\opn\ann{ann}
\opn\sign{sign}
\opn\del{del}
\opn\lex{lex}
\opn\div{div} \opn\Div{Div} \opn\cl{cl} \opn\Cl{Cl}
\opn\Spec{Spec} \opn\Supp{Supp} \opn\supp{supp} \opn\Sing{Sing}
\opn\Ass{Ass}\opn\fdepth{fdepth}
\opn\Ann{Ann} \opn\Rad{Rad} \opn\Soc{Soc}
\opn\Sym{Sym} \opn\Ker{Ker} \opn\Coker{Coker} \opn\Im{Im}
\opn\Hom{Hom} \opn\Tor{Tor} \opn\Ext{Ext} \opn\End{End}
\opn\Aut{Aut} \opn\id{id} \opn\ini{in} \opn\tr{tr}
\opn\nat{nat}\opn\it{it}
\opn\pff{proof}
\opn\Pf{proof} \opn\GL{GL} \opn\SL{SL} \opn\mod{mod} \opn\ord{ord}
\opn\aff{aff} \opn\con{conv} \opn\relint{relint} \opn\st{st}
\opn\lk{lk} \opn\cn{cn} \opn\core{core} \opn\vol{vol}
\opn\link{link} \opn\star{star} \opn\skel{skel} \opn\indeg{indeg}
\opn\Ass{Ass} \opn\Min{Min} \opn\sdepth{sdepth} \opn\depth{depth}
\opn\gr{gr}
\def\pot#1#2{#1[\kern-0.28ex[#2]\kern-0.28ex]}
\opn\dirlim{\underrightarrow{\lim}}
\opn\inivlim{\underleftarrow{\lim}}
\let\iso=\cong
\let\to=\rightarrow
\def\Implies{\ifmmode\Longrightarrow \else
     \unskip${}\Longrightarrow{}$\ignorespaces\fi}
\def\implies{\ifmmode\Rightarrow \else
     \unskip${}\Rightarrow{}$\ignorespaces\fi}
\def\iff{\ifmmode\Longleftrightarrow \else
     \unskip${}\Longleftrightarrow{}$\ignorespaces\fi}
\theoremstyle{plain}
\newtheorem{Theorem}{Theorem}[section]
 \newtheorem{Lemma}[Theorem]{Lemma}
 \newtheorem{Corollary}[Theorem]{Corollary}
 \newtheorem{Proposition}[Theorem]{Proposition}
 \newtheorem{Conjecture}[Theorem]{Conjecture}
 \theoremstyle{definition}
 \newtheorem{Definition}[Theorem]{Definition}
 \newtheorem{Remark}[Theorem]{Remark}
 \newtheorem{Example}[Theorem]{Example}
\let\epsilon\varepsilon
\let\kappa=\varkappa
\opn\dis{dis}
\def\pnt{{\raise0.5mm\hbox{\large\bf.}}}
\opn\Lex{Lex}
\newcommand{\rad}{1.5 pt}
\newcommand{\PP}{\mathcal{P}}
\renewcommand{\H}{\mathrm{H}}
\newcommand{\Pc}{\mathcal{P}}
\newcommand{\HS}{\mathrm{HS}}
\renewcommand{\RR}{\mathcal{R}}
\renewcommand{\SS}{\mathcal{S}}
\newcommand{\MM}{\mathcal{M}}
\renewcommand{\mm}{\mathfrak{m}}
\title{Hilbert series of Parallelogram Polyominoes}
\author[Ayesha Asloob Qureshi]{Ayesha Asloob Qureshi}
\address[Ayesha Asloob Qureshi]{Sabanc\i \; University, Faculty of Engineering and Natural Sciences, Orta Mahalle, Tuzla 34956, Istanbul, Turkey}\email{aqureshi@sabanciuniv.edu}
\author[Giancarlo Rinaldo]{Giancarlo Rinaldo}
\address[Giancarlo Rinaldo]{Department of Mathematics, Informatics, Physics and Earth science,\\
University of Messina\\
Viale F. Stagno d’Alcontres, 31\\
98166 Messina, Italy}
\email{giancarlo.rinaldo@unime.it}
\author[Francesco Romeo]{Francesco Romeo}
\address[Francesco Romeo]{Department of Mathematics\\
University of Trento\\
via Sommarive, 14\\
38123 Povo (Trento), Italy}
\email{francesco.romeo-3@unitn.it}
\thanks{This work was supported by The Scientific and Technological Research Council of Turkey - TUBITAK (Grant No: 118F169).}
\dedicatory{Dedicated to J\"urgen Herzog on the occasion of his 80th birthday}
\begin{document}
\maketitle

\begin{abstract}
We present a conjecture about the reduced Hilbert series of the coordinate ring of a simple polyomino in terms of particular arrangements of non-attacking rooks that can be placed on the polyomino. By using a computational approach, we prove that the above conjecture holds for all simple polyominoes up to rank $11$. In addition, we prove that the conjecture holds true for the class of parallelogram polyominoes, by looking at those as simple planar distributive lattices. Finally, we give a combinatorial interpretation of the Gorensteinnes of parallelogram polyominoes. 

\end{abstract}

\section{Introduction}
Polyominoes are two dimensional objects obtained by joining  squares of the same size edge to edge. They are originally rooted in recreational mathematics and combinatorics and have been widely discussed in connection with tiling and enumeration problem, see \cite{Go}. Certain special classes of polyominoes are also related to algebraic languages. In \cite{DV}, the relation of polyominoes with Dyck words and Motzkin words is beautifully elaborated. 

In 2012, the first author introduced a  quadratic binomial ideal associated to polyominoes, the \emph{polyomino ideal} $I_\PP$, see \cite{Qu}. Let $K$ be a field and $X$ be a $m \times n$ matrix of indeterminates. The polyomino ideals are generated by quite general sets of 2-minors of $X$, and they include some other well known classes of binomial ideals such as the ideals generated by 2-minors of ladders and the join-meet ideals of distributive lattices. The motivation of building such a connection between polyominoes and binomial ideal is to facilitate the study of the binomial ideals and to translate their algebraic properties in combinatorial aspects of polyominoes. Let $I_{\Pc}$ be the polyomino ideal associated to a polyomino $\Pc$ and $K[\Pc]$ be the associated coordinate ring $K[\Pc]$ (see \cite{Qu}). An open and challenging task  is to characterize all polyominoes (in terms of their shape) whose associated coordinate ring is a domain. In \cite{MRR}, authors proved that if $K[\Pc]$ is a domain then $\Pc$ must avoid the ``zig-zag walks". The converse of this statement is an open question. It is known from \cite{HM,QSS} that the associated coordinate rings of simple polyominoes are domains. Roughly speaking, simple polyominoes are polyominoes without holes. 

The contents of this paper are arranged as follows: in Section~\ref{sec:pre}, we recall basic notion and definitions related to polyominoes and distributive lattice. In Subsection~\ref{subsec:polyo-lattice}, the relationship between polyominoes and distributive lattices is explored. In particular, in Proposition~\ref{prop:parallel=simplelattice}, we prove that the parallelogram polyominoes are simple planar distributive lattices. The parallelogram polyominoes are widely discussed in combinatorics (see \cite{DV,DNPR,ABG} for problems related to enumeration and the computation of  the total area),  and they are defined by two lattice paths that use north and east unit steps, and intersect only at their origin and extremity. Let $L$ be a distributive lattice. Given any pair of incomparable elements $\alpha, \beta \in L$,  let $f_{\alpha\beta}$ be the binomial $z_{\alpha \vee \beta}z_{\alpha \wedge \beta}-z_\alpha z_\beta$ in the polynomial ring $K[z_\alpha : \alpha \in L]$. The ideal generated by all such binomials $f_{\alpha\beta}$ is called the \emph{join-meet ideal} of $L$, and is denoted by $I_L$. Moreover,  $K[L]=K[z_\alpha : \alpha \in L]/I_L$ is known as the \emph{Hibi ring} of $L$. The Hibi rings are well understood and possess several nice algebraic and homological properties. It is known from \cite{H} that Hibi rings are Cohen-Macaulay domains and the generators of $I_L$ form the reduced Gr\"obner basis with respect to reverse lexicographical order. Moreover, the Hilbert series of Hibi rings is described in \cite{BGS}. The Hibi ring of simple planar distributive lattice coincide with its coordinate ring as a polyomino. In particular, the join-meet ideal and polyomino ideal of a simple planar distributive lattice is the same, see Remark~\ref{rem:sameideal}.  This  identification allows us to use the existing knowledge on Hibi rings arising from simple planar distributive lattices and translate it in terms of their structure as coordinate rings of parallelogram polyominoes. Moreover, from \cite[Theorem 3.1]{EHQR} it is known that the polyomino ideals of $L$-convex polyominoes can be interpreted as polyomino ideals of  certain Ferrer diagrams. The Ferrer diagrams are a special subclass of parallelogram polyominoes. Therefore, the results provided in subsequent sections hold for $L$-convex polyominoes, which in general, do not have a structure of a simple planar distributive lattice.

In Section~\ref{sec:rookcomp}, we study the Hilbert series of parallelogram polyominoes. A polyomino can be viewed as a pruned chessboard. With this point of view, in combinatorics the rook polynomial of polyominoes is a well studied topic, for example see  \cite[Chapter 7]{Ri}. A rook polynomial $\sum_{i=1}^n r_ix^i$ is a polynomial whose coefficients $r_i$ represents the number of distinct ways of arranging $i$ rooks on squares of $\Pc$ in non-attacking positions.  In \cite{EHQR} and \cite{RR}, the authors linked the Castelnuovo-Mumford regularity of $K[\Pc]$ to the maximum number of non-attacking rooks that can be placed on the polyomino, for the classes of $L$-convex polyominoes and simple thin polyominoes. For the latter class, in \cite{RR}, the authors proved that the polynomial $h(t)$ of the reduced Hilbert series $h(t)/(1-t)^d$ of $K[\Pc]$ coincides with the \emph{rook polynomial} of $\Pc$. This result in \cite{RR} motivated us to study the relation between the Hilbert series and the rook polynomial for simple non-thin polyominoes.  Recently, another paper in this direction has been written by Kummini and Veer \cite{KV}. In Section~\ref{sec:rookcomp}, we introduce an equivalence relation on the rook complex of a simple polyomino $\Pc$. We conjecture that the number of equivalence classes of $k$ non-attacking rooks arrangements coincides with the coefficient $h_k$ of the polynomial $h(t)$ in the reduced Hilbert series. We prove that Conjecture \ref{conj:rtilde} holds true for the class of \emph{parallelogram polyominoes}. Moreover, by using a computational approach, we prove that Conjecture \ref{conj:rtilde} holds true for any simple polyomino having at most 11 cells. In \cite{QRR}, we provide an implementation in \texttt{Macaulay2} \cite{M2} and Java for such computations.

The Gorensteinness of the coordinate ring of some classes of polyominoes has been studied by many authors, for example see \cite{Qu}, \cite{EHQR} and \cite{RR}.
Even though the Gorenstein ladder determinantal rings and the Gorenstein Hibi rings are completely characterized, in Section~\ref{sec:gor} we give a combinatorial characterization of Gorenstein parallelogram polyominoes that is analogous to the characterizations given in \cite{EHQR} and \cite{RR} for $L$-convex and simple thin polyominoes, respectively. Such characterization involves the intersections of the maximal rectangles of the parallelogram polyominoes. It is well-known that every parallelogram polyomino can be uniquely represented as a Motzkin path. In Corollary \ref{cor:Motz}, we give a characterization of the Motzkin paths which represent Gorenstein parallelogram polyominoes.

\section{Distributive Lattices and Polyominoes}\label{sec:pre}

\subsection{Polyominoes and polyomino ideals}
In this subsection, we recall general definitions and notation on polyominoes. 

Let $a = (i, j), b = (k, \ell) \in \NN^2$, with $i	\leq k$ and $j\leq\ell$. The set $[a, b]=\{(r,s) \in \NN^2 : i\leq r \leq k \text{ and } j \leq s \leq \ell\}$ is called an \textit{interval} of $\NN^2$. Moreover, if $i<k$ and $j < \ell$, then $[a,b]$ is called a \textit{proper interval}, and the elements $a,b,c,d$ are called corners of $[a,b]$, where $c=(i,\ell)$ and $d=(k,j)$. In particular, $a,b$ are the \textit{diagonal corners} and $c,d$ are the \textit{anti-diagonal corners} of $[a,b]$. The corner $a$ (resp. $c$) is also called the left lower (resp. upper) corner of $[a,b]$, and $d$ (resp. $b$) is the right lower (resp. upper) corner of $[a,b]$. A proper interval of the form $C = [a, a + (1, 1)]$ is called a \textit{cell}. The corners of $C$ are called the vertices of $C$. The set of vertices of $C$ is denoted by $V(C)$. The edge set of $C$, denoted by $E(C)$, is 
\[
 \{\{a,a+(1,0)\}, \{a,a+(0,1)\},\{a+(1,0),a+(1,1)\},\{a+(0,1),a+(1,1)\} \}.
\]
We denote by $\ell (C)$, the left lower corner of a cell $C$.  

Let $\PP$ be a finite collection of cells of $\NN^2$, and let $C$ and $D$ be two cells of $\PP$. Then $C$ and $D$ are said to be \textit{connected}, if there is a sequence of cells $C = C_1,\ldots, C_m = D$ of $\PP$ such that $C_i\cap C_{i+1}$ is an edge of $C_i$
for $i = 1,\ldots, m - 1$. In addition, if $C_i \neq C_j$ for all $i \neq j$, then $C_1,\dots, C_m$ is called a \textit{path} (connecting $C$ and $D$). A collection of cells $\PP$ is called a \textit{polyomino} if any two cells of $\PP$ are connected. We denote by $V(\PP)=\cup _{C\in \PP} V(C)$ the vertex set of $\PP$ and by $E(\PP)=\cup _{C\in \PP} E(C)$ the edge set of $\PP$. In particular, a polyomino could be also seen as a connected bipartite graph. Note that, if $a,b\in V(\PP)$, then $a$ and $b$ are connected in $V(\PP)$ by a path of edges. More precisely,  one can find  a sequence of vertices $a=a_1,\ldots, a_{n}=b$ such that $\{a_i, a_{i+1}\} \in E(\PP)$, for all $i=1, \ldots, n-1$.  The number of cells of $\PP$ is called the \textit{rank} of $\PP$, and we denote it by $\rk \PP$. We also define the \emph{lower left corner} of $\PP$ as $\ell (\PP)= \min \{ \ell (C) : C \in \PP \}$. Each proper interval  $[(i,j),(k,l)] $ in $\NN^2$ can be identified as a polyomino and it is referred to as {\em rectangular} polyomino, or simply as rectangle. If $s=k-i$ and $t=l-j$ we say that the rectangle has {\em size} $s \times t$. In particular, given a rectangle of $\PP$ we call \emph{diagonal cells} the cells $A,B$ such that $\ell(A)=(i,j)$ and $\ell(B)=(k-1,l-1)$ and \emph{antidiagonal cells} the cells $C,D$ such that $\ell(C)=(i,l-1)$ and $\ell(D)=(k-1,j)$.

A polyomino $\PP$ is called a \emph{subpolyomino} of $\PP'$, if all cells of $\PP$ are contained in $\PP'$. Given a polyomino $\PP$, the smallest rectangle (with respect to its size) containing $\PP$ as a subpolyomino, is called the \emph{bounding box} of $\PP$.

A proper interval $[a,b]$ is called an \emph{inner interval} of $\PP$ if all cells of $[a,b]$ belong to $\PP$.
We say that a polyomino $\PP$ is \textit{simple} if for any two cells $C$ and $D$ of $\NN^2$ not belonging to $\PP$, there exists a path $C=C_{1},\dots,C_{m}=D$ such that $C_i \notin \PP$ for any $i=1,\dots,m$. Roughly speaking, a polyomino without a ``hole'' is called a simple polyomino. 
An interval $[a,b]$ with $a = (i,j)$ and $b = (k, \ell)$ is called a \emph{horizontal edge interval} of $\PP$ if $j =\ell$ and the sets $\{(r, j), (r+1, j)\}$ for
$r = i, \dots, k-1$ are edges of cells of $\PP$. If a horizontal edge interval of $\PP$ is not strictly contained in any other horizontal edge interval of $\PP$, then we call it \emph{maximal horizontal edge interval}. Similarly, one defines vertical edge intervals and maximal vertical edge intervals of $\PP$. 

A polyomino $\PP$ is called \emph{row convex} if for any two of its cells with lower left corners $a=(i,j)$ and $b=(k,j)$, with $k>i$, all cells with lower left corners $(l,j)$ with $i\leq l \leq k$ are cells of $\PP$. Similarly,  $\PP$ is called \emph{column convex} if for any two of its cells with lower left corners $a=(i,j)$ and $b=(i,k)$, with $k>j$, all cells with lower left corners $(i,l)$ with $j\leq l \leq k$ are cells of $\PP$. If a polyomino $\Pc$ is simultaneously row and column convex then $\Pc$ is called \emph{convex}. Let $\mathcal{C}:C_1, C_2, \ldots, C_m$ be a path of cells and $(i_k, j_k)$ be the lower left corner of $C_k$ for $1 \leq k \leq m$. Then $\mathcal{C}$ has a change of direction at $C_k$ for some $2 \leq k \leq m-1$ if $i_{k-1}\neq i_{k+1}$ and $j_{k-1} \neq j_{k+1}$.
A convex polyomino $\Pc$ is called $k$-convex if any two cells in $\Pc$ can be connected by a path of cells in $\Pc$ with at most $k$ change of directions. The $1$-convex polyominoes are referred to as $L$-convex polyominoes and 2-convex polyominoes are referred to as $Z$-convex polyominoes in literature. Figure \ref{fig:NL2con} shows three examples of polyominoes that are non-simple, $L$-convex and $2$-convex respectively.

\begin{figure}[H]
    \centering
    \begin{subfigure}{0.30 \textwidth}
     \resizebox{0.6\textwidth}{!}{
  \begin{tikzpicture}

\draw[thick] (0,1) --  (0,3);
\draw[thick] (1,1) --  (1,5);
\draw[thick] (2,1) --  (2,5);
\draw[thick] (3,0) --  (3,5);
\draw[thick] (4,0) --  (4,2);

\draw[thick] (0,1) --  (4,1);
\draw[thick] (0,2) --  (4,2);
\draw[thick] (0,3) --  (3,3);
\draw[thick] (1,4) --  (3,4);
\draw[thick] (1,5) --  (3,5);
\draw[thick] (3,0) --  (4,0);

\fill[fill=gray, fill opacity=0.2] (0,1) -- (0,3)-- (1,3) -- (1,1);
\fill[fill=gray, fill opacity=0.2] (2,1) -- (2,5)-- (3,5) -- (3,1);
\fill[fill=gray, fill opacity=0.2] (1,3) -- (1,5)-- (2,5) -- (2,3);
\fill[fill=gray, fill opacity=0.2] (3,0) -- (3,2)-- (4,2) -- (4,0);
\fill[fill=gray, fill opacity=0.2] (1,1) -- (1,2)-- (2,2) -- (2,1);

   \end{tikzpicture}}
    \end{subfigure}\hfill%
    \begin{subfigure}{0.30 \textwidth}
    \resizebox{0.8\textwidth}{!}{
  \begin{tikzpicture}
\draw[thick] (-1,1) --  (-1,2);
\draw[thick] (0,1) --  (0,3);
\draw[thick] (1,0) --  (1,5);
\draw[thick] (2,0) --  (2,5);
\draw[thick] (3,0) --  (3,4);
\draw[thick] (4,1) --  (4,2);

\draw[thick] (-1,1) --  (4,1);
\draw[thick] (-1,2) --  (4,2);
\draw[thick] (0,3) --  (3,3);
\draw[thick] (1,4) --  (3,4);
\draw[thick] (1,5) --  (2,5);
\draw[thick] (1,0) --  (3,0);

\fill[fill=gray, fill opacity=0.2] (1,2) -- (2,2)-- (2,3) -- (1,3);
\fill[fill=gray, fill opacity=0.2] (0,1) -- (0,3)-- (1,3) -- (1,1);
\fill[fill=gray, fill opacity=0.2] (2,0) -- (2,4)-- (3,4) -- (3,0);
\fill[fill=gray, fill opacity=0.2] (1,3) -- (1,5)-- (2,5) -- (2,3);
\fill[fill=gray, fill opacity=0.2] (3,1) -- (3,2)-- (4,2) -- (4,1);
\fill[fill=gray, fill opacity=0.2] (1,1) -- (1,2)-- (2,2) -- (2,1);
\fill[fill=gray, fill opacity=0.2] (1,0) -- (2,0)-- (2,1) -- (1,1);
\fill[fill=gray, fill opacity=0.2] (-1,1) -- (0,1)-- (0,2) -- (-1,2);
\end{tikzpicture}}
    \end{subfigure}\hfill%
    \begin{subfigure}{0.30 \textwidth}
    \resizebox{0.8\textwidth}{!}{
       \begin{tikzpicture}
\draw[thick] (-1,1) --  (-1,2);
\draw[thick] (0,1) --  (0,3);
\draw[thick] (1,0) --  (1,5);
\draw[thick] (2,0) --  (2,5);
\draw[thick] (3,0) --  (3,4);
\draw[thick] (4,3) --  (4,4);

\draw[thick] (-1,1) --  (3,1);
\draw[thick] (-1,2) --  (3,2);
\draw[thick] (0,3) --  (4,3);
\draw[thick] (1,4) --  (4,4);
\draw[thick] (1,5) --  (2,5);
\draw[thick] (1,0) --  (3,0);

\fill[fill=gray, fill opacity=0.2] (1,2) -- (2,2)-- (2,3) -- (1,3);
\fill[fill=gray, fill opacity=0.2] (0,1) -- (0,3)-- (1,3) -- (1,1);
\fill[fill=gray, fill opacity=0.2] (2,0) -- (2,4)-- (3,4) -- (3,0);
\fill[fill=gray, fill opacity=0.2] (1,3) -- (1,5)-- (2,5) -- (2,3);
\fill[fill=gray, fill opacity=0.2] (3,3) -- (3,4)-- (4,4) -- (4,3);
\fill[fill=gray, fill opacity=0.2] (1,1) -- (1,2)-- (2,2) -- (2,1);
\fill[fill=gray, fill opacity=0.2] (1,0) -- (2,0)-- (2,1) -- (1,1);
\fill[fill=gray, fill opacity=0.2] (-1,1) -- (0,1)-- (0,2) -- (-1,2);

\end{tikzpicture}}
    \end{subfigure}
    \caption{From left to right: a non-simple polyomino, an $L$-convex polyomino and a $2$-convex polyomino}\label{fig:NL2con}
\end{figure}
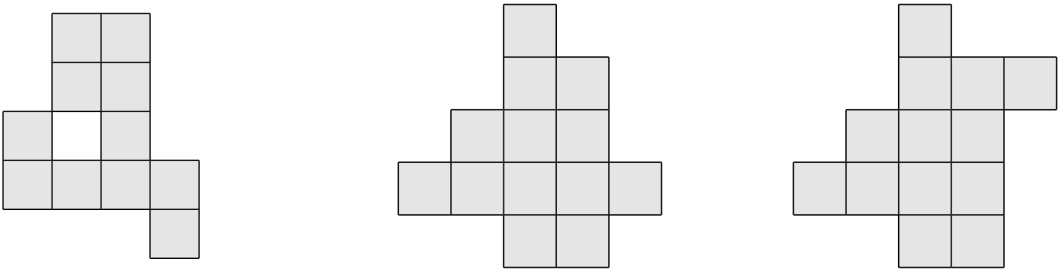

Let $\PP$ be a polyomino and define the polynomial ring $R = K[x_v \ | \ v \in V(\PP)]$ over a field $K$. The binomial $x_a x_b - x_c x_d\in R$ is called an \textit{inner 2-minor} of $\PP$ if $[a,b]$ is an inner interval of $\PP$, where $c,d$ are the anti-diagonal corners of $[a,b]$. The ideal $I_\PP\subset R$ generated by all of the inner $2$-minors of $\PP$ is called the \textit{polyomino ideal} of $\PP$. The quotient ring $K [\PP] = R/I_\PP$ is called the coordinate ring of $\Pc$. It is known from \cite[Theorem 2.1]{HM} and \cite[Corollary 2.3]{QSS} that if $\PP$ is a simple polyomino then $K[\PP]$ is a normal Cohen-Macaulay domain. Combining this with \cite[Corollary 3.3]{HQS}, one obtains the following
\begin{Lemma}\label{lem:dim}
Let $\PP$ be a simple polyomino. Then $K[\PP]$ is a Koszul, normal Cohen-Macaulay domain of Krull dimension $|V(\PP)|-\rk \PP$.
\end{Lemma}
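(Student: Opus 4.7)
The statement is essentially an assembly of previously established results, so my plan is organizational rather than involving new techniques. The strategy is to invoke the three cited references in sequence, verify that they combine cleanly, and read off the dimension formula.

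First I would apply \cite[Theorem 2.1]{HM} together with \cite[Corollary 2.3]{QSS} to conclude, under the simplicity hypothesis on $\PP$, that $K[\PP]$ is a normal Cohen-Macaulay domain. These results are what the preceding paragraph explicitly attributes, and they constitute the deepest input: proving integrality for $I_\PP$ requires exhibiting $K[\PP]$ as a subring of a polynomial ring (a toric-like realization using the vertices and edges of the polyomino), while normality and Cohen-Macaulayness follow from the corresponding combinatorial or Gr\"obner-theoretic analysis carried out there. So this step essentially delivers three of the four claims (domain, normal, Cohen-Macaulay) for free.

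Next, for the Koszul property and the dimension formula, I would invoke \cite[Corollary 3.3]{HQS}. The expected content of that corollary is the existence of a squarefree quadratic Gr\"obner basis for $I_\PP$ (the inner $2$-minors themselves, with respect to a suitable monomial order compatible with a shelling of the simple polyomino); by the standard principle that a quadratic Gr\"obner basis forces Koszulness of the quotient, this yields the Koszul assertion. From the same initial ideal one reads off $\height I_\PP = \rk \PP$, since each cell contributes exactly one leading monomial of degree two, supported on a distinct pair of vertices, and these leading monomials cut out a squarefree complex whose complement has the desired dimension. Combining with $\dim R = |V(\PP)|$ gives
\[
\dim K[\PP] = |V(\PP)| - \rk \PP.
\]

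I do not expect a genuine obstacle in this proof; the only minor care needed is to check that the monomial order used in \cite{HQS} matches the hypotheses under which \cite{HM} and \cite{QSS} establish integrality, so that the Gr\"obner basis conclusions and the domain/normality conclusions apply simultaneously. Once that compatibility is confirmed, the lemma follows by concatenation of the three citations.
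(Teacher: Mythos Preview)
Your proposal is correct and follows exactly the paper's approach: the paper gives no proof beyond the sentence preceding the lemma, which cites \cite[Theorem 2.1]{HM} and \cite[Corollary 2.3]{QSS} for the normal Cohen--Macaulay domain property and then \cite[Corollary 3.3]{HQS} for the remaining claims. Your only superfluous remark is the compatibility check between monomial orders at the end---since being a domain, normal, Cohen--Macaulay, Koszul, and the Krull dimension are all intrinsic properties of $K[\PP]$, the cited results combine automatically without any such verification.
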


\subsection{Distributive lattices and join-meet ideals}
Let $P$ be a poset with partial order relation $<$. A chain of $P$ is a totally ordered subset of $P$. The length of a chain $\mathfrak{c}$, denoted by $\mathrm{length}(\mathfrak{c})$,  is $|\mathfrak{c}|-1$. Given $a \in P$, the rank of $a$ in $P$, denoted by $\rank(a)$, is the  supremum of length of chains in $P$ that descends from $a$. The {\em rank} of $P$, denoted by $\rank (P)$, is the supremum of length of chains of $P$. An  order ideal of $P$ is a subset $I$ of P with the following property:  if $a \in I$ then $b \in I$ for all $b \in P$ with $b <a$. Two element $a,b \in P$ are called incomparable if $ a \not<b$ and $b \not<a$.
 
 Let $L$ be a distributive lattice with unique minimal element $\min(L)$ and unique maximal element $\max(L)$. An element $a \in L$ is called {\em join-irreducible} if $a \neq \min(L)$ and $a \neq b \vee c$ for any $b,c \in L\setminus \{a\}$. Let $P$ be the set consisting of all join-irreducible elements of $L$. Then $P$ is a poset with partial order inherited from $L$.  Let $I(P)$ be the set consisting of all order ideals of $P$, ordered by inclusion. In particular, $\emptyset, P \in I(P)$ and $I(P)$ is a distributive lattice with $\min(I(P)) = \emptyset$ and $\max(I(P))=P$. It is known by Birkhoff's fundamental theorem  of finite distributive lattices \cite[Chapter 9]{B} that $L \iso I(P)$. Moreover, $\rank(L)=|P|$. We refer to \cite[Chapter 1]{B} for the basic definitions and notation in lattice theory.
 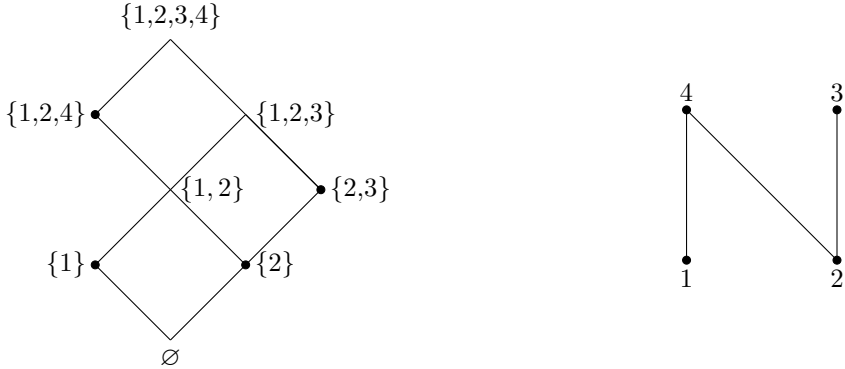
\begin{figure}[H]
 \centering
 \begin{subfigure}{0.5\textwidth}
  \centering
\begin{tikzpicture}
\draw (1,0)--(0,1);
\draw (1,0)--(2,1);
\draw (2,1)--(1,2);
\draw (1,2)--(0,1);
\draw (2,1)--(3,2);
\draw (2,3)--(3,2);
\draw (1,2)--(2,3);
\draw (2,3)--(3,2);
\draw (1,2)--(0,3);
\draw (0,3)--(1,4);
\draw (2,3)--(1,4);
\node at (1,0)[anchor=north]{$\varnothing$};
\filldraw (0,1) circle (\rad) node [anchor=east]{\{1\}};
\filldraw (2,1) circle (\rad) node [anchor=west]{\{2\}};
\filldraw (3,2) circle (\rad) node [anchor=west]{\{2,3\}};
\node at (1,2)[anchor=west]{$\{1,2\}$};
\node at (2,3)[anchor=west]{\{1,2,3\}};
\filldraw (0,3) circle (\rad) node [anchor=east]{\{1,2,4\}};
\node at (1,4)[anchor=south]{\{1,2,3,4\}};
\end{tikzpicture}
\end{subfigure}%
\begin{subfigure}{0.5\textwidth}
 \centering
\begin{tikzpicture}
\draw (0,0)--(0,2);
\draw (2,0)--(2,2);
\draw (2,0)--(0,2);

\filldraw (0,0) circle (\rad) node [anchor=north]{1};
\filldraw (0,2) circle (\rad) node [anchor=south]{4};
\filldraw (2,0) circle (\rad) node [anchor=north]{2};
\filldraw (2,2) circle (\rad) node [anchor=south]{3};

\end{tikzpicture}
\end{subfigure}
\caption{A poset and its ideals lattice $I(P)$}. 
\end{figure}
Let $L$ be finite distributive lattice and $S=K[x_a : a \in L]$. The join-meet ideal $I_L\subset S$ of $L$ is the ideal generated by binomials $x_ax_b-x_{a\vee b} x_{a\wedge b}$, where $a$ and $b$ are incomparable elements in $L$.  It is known from \cite{H} that $K[L]=S/I_L$ is a normal, Cohen-Macaulay domain.
 
 Now we recall some basic properties of planar distributive lattice. Consider the natural partial order on $\NN^2$ defined as follows: for any $(i,j), (k,l) \in \NN^2$, we have $(i,j) \leq (k,l)$ if and only if $i\leq k$ and $j \leq l$. With this natural partial order, $\NN^2$ is an infinite distributive lattice. Let $L$ be a finite sublattice of $\NN^2$. Then $L$ is called a {\em planar} distributive lattice if $(0,0) \in L$ and for any $(i,j), (k,l) \in L$ with$(i,j)< (k,l)$ , there exists a chain in $L$ of the form $(i,j)=(i_0,j_0)< (i_1, j_1)< \ldots < (i_s,j_s)=(k,l)$ such that $i_{k+1}+j_{k+1}= i_k+j_k+1$ for all $k$. The condition $i_{k+1}+j_{k+1}= i_k+j_k+1$ yields that either $(i_{k+1},j_{k+1})=(i_k,j_k)+(0,1)$ or  $(i_{k+1},j_{k+1})=(i_k,j_k)+(1,0)$. A planar distributive lattice $L$ is called {\em simple} if, for all $0 <r<\rank(L)$, there exist at least two elements in $L$ with rank $r$. Equivalently,  $L$ is simple if there is no $a \in L$ with the property that for every $b\in L$ either $a \leq b$ or $b \leq a$.

\subsection{The relationship between polyominoes and distributive lattices}\label{subsec:polyo-lattice}

In this section, we talk about the polyominoes arising from simple planar distributive lattices. Note that any simple planar distributive lattice $L$ can be identified as a convex polyomino.
 
 \begin{Proposition}\label{prop:convex=lattice}
 Let $\Pc$ be a convex polyomino with bounding box $[(0,0), (m,n)]$. If $(0,0), (m,n) \in V(\Pc)$, then $V(\Pc)$ determines a simple planar distributive lattice.
 \end{Proposition}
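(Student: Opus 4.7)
The plan is to describe $V(\Pc)$ explicitly by column intervals and then verify the required properties directly. For each column $x\in\{0,\ldots,m-1\}$, let $[a_x,b_x]$ denote the $y$-range of cells of $\Pc$ in that column; this range is non-empty because $[(0,0),(m,n)]$ is the bounding box. The hypotheses $(0,0),(m,n)\in V(\Pc)$ force $[(0,0),(1,1)]$ and $[(m-1,n-1),(m,n)]$ to be cells of $\Pc$, hence $a_0=0$ and $b_{m-1}=n-1$. The first (and main) technical step is to show that row-convexity plus connectedness force $a$ to be quasi-convex (first non-increasing, then non-decreasing): a peak of $a$ at some $x_2$ would let one pick $y=a_{x_2}-1$ and use the overlap of adjacent column ranges to exhibit cells $(x_2-1,y)$ and $(x_2'+1,y)$ of $\Pc$ flanking the missing $(x_2,y)$, breaking the row convexity of row $y$. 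Symmetrically $b$ is quasi-concave. The boundary conditions $a_0=0$ (already the minimum possible value) and $b_{m-1}=n-1$ (already the maximum possible value) then upgrade these to $a$ and $b$ being non-decreasing. Consequently the vertices in column $x$ form an interval $[\alpha(x),\beta(x)]$ with $\alpha,\beta$ non-decreasing, $\alpha(0)=0$, $\beta(m)=n$, and $\alpha(x)\leq\beta(x)$ for all $x$, so $V(\Pc)=\{(x,y):0\leq x\leq m,\ \alpha(x)\leq y\leq\beta(x)\}$.

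Sublattice, boundedness, and planarity now all follow from monotonicity. For incomparable $u=(x_1,y_1),v=(x_2,y_2)$ in $V(\Pc)$, without loss of generality with $x_1<x_2$ and $y_1>y_2$, the chain of inequalities $\alpha(x_1)\leq\alpha(x_2)\leq y_2<y_1\leq\beta(x_1)\leq\beta(x_2)$ certifies that $u\wedge v=(x_1,y_2)$ and $u\vee v=(x_2,y_1)$ both belong to $V(\Pc)$; distributivity is inherited from $\NN^2$, and the global minimum $(0,0)$ and maximum $(m,n)$ lie in $V(\Pc)$ by hypothesis. For planarity, given $u<v$ in $V(\Pc)$, I would construct a saturated chain greedily: from $(x,y)$ take the step $(x,y+1)$ when $y<\beta(x)$ and $y<y_2$, and the step $(x+1,y)$ otherwise. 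The key inequality $\alpha(x+1)=a_x\leq b_x<\beta(x)$ (valid for $0\leq x\leq m-1$) guarantees that the E step always lands in $V(\Pc)$ when the N step is blocked.

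Finally for simplicity, I would check that for every rank $0<r<m+n$ at least two elements exist. Since $\alpha,\beta$ are non-decreasing, the functions $x\mapsto x+\alpha(x)$ and $x\mapsto x+\beta(x)$ are strictly increasing, so the columns containing a rank-$r$ vertex form an interval $X_r=[x^+(r),x^-(r)]$ whose cardinality equals the number of rank-$r$ elements. If $X_r$ were a singleton $\{x_0\}$ for some $0<r<m+n$ with $0<x_0<m$, the defining inequalities would unpack to $b_{x_0-1}<a_{x_0}$, contradicting the cell-range overlap forced by the connectedness of $\Pc$ between columns $x_0-1$ and $x_0$; the boundary cases $x_0=0$ and $x_0=m$ force respectively $r=0$ and $r=m+n$, both of which are excluded. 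The main obstacle is the monotonicity step: verifying that row and column convexity together with the two corner conditions really force $a$ and $b$ to be non-decreasing is the heart of the proof, and is where the hypothesis $(0,0),(m,n)\in V(\Pc)$ is essential; a generic convex polyomino such as a diamond shape missing the corner $(0,0)$ already fails to be a lattice, since the join of two minimal elements need not exist in $V(\Pc)$.
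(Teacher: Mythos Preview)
Your proof is correct and takes a genuinely different route from the paper. The paper argues the sublattice property by contradiction: if the meet $(i,l)$ of two incomparable vertices were missing from $V(\Pc)$, then convexity (via \cite[Lemma~1.1]{Qu}) forces an entire ``L''-shaped region of vertices to be missing, which would block every edge-path from $(0,0)$ to $(i,j)$ and contradict connectedness; the join is handled symmetrically, and the planar and simple properties are dismissed in one sentence as following from the definition of a polyomino. Your argument instead gives an explicit structural description: you show that the cell-column bounds $a_x,b_x$ are monotone (using the adjacent-column overlap forced by connectedness to rule out peaks of $a$ and valleys of $b$, then the corner hypotheses $a_0=0$, $b_{m-1}=n-1$ to upgrade quasi-convexity/concavity to monotonicity), and from this staircase description you read off the sublattice, planarity, and simplicity properties directly. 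Your approach is longer but more self-contained (it avoids the external lemma) and, notably, actually carries out the verification of planarity and simplicity that the paper leaves implicit; in particular, your rank-count argument for simplicity, unpacking $|X_r|=1$ to $b_{x_0-1}<a_{x_0}$ and contradicting the column overlap, is a genuine piece of work that the paper does not supply. The trade-off is that the paper's connectivity argument is shorter and more conceptual for the sublattice step, while yours yields a reusable parametrization of $V(\Pc)$.
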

\begin{proof}
First we show that $V(\Pc)$ is a sublattice of $\NN^2$. Let $a,b \in V(\Pc)$ be two incomparable elements. We need to show that $a \vee b$ and $a\wedge b$ belong to $V(\Pc)$. Let $a=(i,j)$ and $b=(k,l)$. Since $a$ and $b$ are incomparable, we may assume that $i<k$ and $j>l$. Then $a \vee b=(k,j)$ and $a\wedge b=(i,l)$.  First we claim that $a\wedge b=(i,l) \in V(\Pc)$. On the contrary, suppose that $a\wedge b=(i,l) \notin V(\Pc)$. By using the convexity of $\Pc$ and applying \cite[Lemma 1.1]{Qu}, it follows that $(i,p), (q,l) \notin V(\Pc)$, for any $p \leq l$ and $q \leq i$. Since, $\Pc$ is a polyomino, and hence connected, there must exist a path in $V(\Pc)$ from $(0,0)$ to $(i,j)$. However, any possible path in $V(\Pc)$ from $(0,0)$ to $(i,j)$ must either contain a vertex $(i,p)$ with  $p \leq l$ or a vertex  $ (q,l)$ with $q \leq i$, a contradiction. This yields $a\wedge b=(i,l) \in V(\Pc)$. A similar argument can be applied to conclude that $a \vee b=(k,j) \in V(\Pc)$. Moreover, the assertion that $\Pc$ is simple and planar as a distributive lattice, follows directly from the definition of polyominoes. 
\end{proof}

If a polyomino $\PP$ admits a structure of a distributive lattice on $V(\PP)$, then instead of $V(\PP)$, we refer to $\PP$ as a distributive lattice.
Let $(a,b) \in \NN\times \NN$. The edge $\{(a,b), (a+1,b)\}$ is called an {\em east step} and the edge $\{(a,b), (a,b+1)\}$ is called a {\em north step} in  $\NN\times \NN$. A sequence of vertices $\mathcal{S}:(a_0,b_0), (a_1,b_1) \ldots (a_k,b_k)$ in $\NN \times \NN$ is called a {\em north-east} path in $\NN \times \NN$, if $\{(a_i,b_i), (a_{i+1},b_{i+1})\}$ is either an east or a north step for each $i$. The vertices $(a_0,b_0)$ and $(a_k,b_k)$ are called the endpoints of $\mathcal{S}$.

Let $\mathcal{S}_1:(a_0,b_0), (a_1,b_1), \ldots , (a_k,b_k)$ and $\mathcal{S}_2:(c_0,d_0), (c_1,d_1), \ldots , (c_k,d_k)$ be two north-east paths in $\NN\times \NN$ such that $(a_0,b_0)=(c_0,d_0)$ and $(a_k,b_k)=(c_k,d_k)$.  If for all $1 \leq i,j \leq k-1$ we have $b_i > d_j$ whenever $a_i=c_j$, then  $\mathcal{S}_1$ is said to ``lie above" $\mathcal{S}_2$. The {\em parallelogram polyomino} $\Pc$ determined by $(\mathcal{S}_1,\mathcal{S}_2)$, where $\mathcal{S}_1$ lies above $\mathcal{S}_2$, is the region bounded above by $\mathcal{S}_1$ and bounded below by $\mathcal{S}_2$.  We refer to the path $S_1$ as the upper path of $\Pc$ and the path $\SS_2$ as the lower path of $\Pc$.  We will denote a parallelogram polyomino as $\Pc=(\SS_1, \SS_2)$ when we need to emphasize on its upper and lower paths.  In Figure \ref{fig:bigparall}, a parallelogram polyomino is shown. The thick line in Figure \ref{fig:bigparall} represents the upper path of $\Pc$ and the dashed line represents the lower path of $\Pc$. 
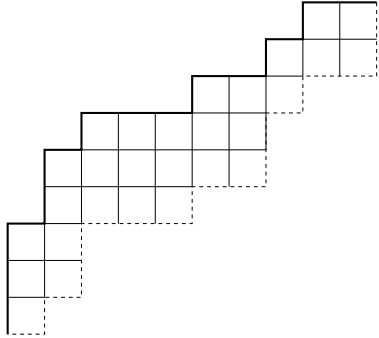
\begin{figure}[H]
  \centering
  \reflectbox{
  \rotatebox{270}{
    \resizebox{0.3\textwidth}{!}{
  \begin{tikzpicture}

\draw[dashed] (0,0)--(2,0)--(2,2)--(3,2)--(3,3)--(5,3)--(5,5)--(6,5)--(6,8)--(8,8)--(8,9)--(9,9)--(9,10);

\draw[ultra thick] (0,0)--(0,2)--(1,2)--(1,3)--(2,3)--(2,5)--(3,5)--(3,8)--(4,8)--(4,9)--(6,9)--(6,10)--(9,10);



\draw (0,1)--(2,1);\draw (1,2)--(2,2);\draw (2,3)--(4,3);\draw (2,4)--(5,4);\draw (3,5)--(5,5);\draw (3,6)--(6,6);\draw (3,7)--(6,7);\draw (4,8)--(6,8);\draw (6,9)--(8,9);

\draw (1,0)--(1,2);\draw (2,2)--(2,3);\draw (3,3)--(3,5);\draw (4,3)--(4,8);\draw (5,5)--(5,9);\draw (6,8)--(6,9);\draw (7,8)--(7,10); \draw(8,9)--(8,10);
   \end{tikzpicture}}}}
   \caption{A parallelogram polyomino}\label{fig:bigparall}
\end{figure}

A parallelogram polyomino that is also $L$-convex is known as a \emph{Ferrer diagram}. In \cite{EHQR} the authors prove that the coordinate ring of any  $L$-convex polyomino is isomorphic to the coordinate ring of a Ferrer diagram. 


 One can observe that every parallelogram polyomino $\Pc$ is a simple planar distributive lattice, as shown in the following proposition.
 
 \begin{Proposition}\label{prop:parallel=simplelattice}
A finite collection of cells $\Pc$ is a parallelogram polyomino if and only if $\Pc$ is a simple planar distributive lattice. 
 \end{Proposition}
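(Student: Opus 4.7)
The proof splits into two implications; the $(\Rightarrow)$ direction follows almost immediately from Proposition~\ref{prop:convex=lattice}, while the $(\Leftarrow)$ direction requires an explicit reconstruction of the two boundary paths from the lattice structure.

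For the $(\Rightarrow)$ direction, take a parallelogram polyomino $\Pc = (\SS_1, \SS_2)$ and translate so that both $\SS_1$ and $\SS_2$ run from $(0,0)$ to $(m,n)$. Since both are monotone north-east paths confined to $[(0,0),(m,n)]$, this rectangle is the bounding box of $\Pc$ and both $(0,0), (m,n) \in V(\Pc)$. Row and column convexity of $\Pc$ are immediate from the monotonicity of the defining paths. Proposition~\ref{prop:convex=lattice} then yields that $V(\Pc)$ is a simple planar distributive lattice.

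For the $(\Leftarrow)$ direction, assume $L := V(\Pc)$ is a simple planar distributive lattice with $\min(L) = (0,0)$ and $\max(L) = (m,n)$. For each $r \in \{0, 1, \ldots, m+n\}$, set $\mathcal{A}_r := \{(i,j) \in L : i + j = r\}$ and let $a_r, b_r$ denote respectively the minimum and maximum $y$-coordinate occurring on $\mathcal{A}_r$. I would first establish two structural facts: (i) the set of $y$-coordinates on $\mathcal{A}_r$ fills the whole interval $[a_r, b_r]$, which follows by using meet and join to pin down the two opposite corners of a rectangle in $L$ and then invoking the chain condition to fill in its horizontal and vertical sides; (ii) $b_{r+1} \in \{b_r, b_r+1\}$ and $a_{r+1} \in \{a_r, a_r+1\}$, each proved by inspecting the last step of a saturated chain in $L$ joining $(0,0)$ to the top or bottom element of $\mathcal{A}_{r+1}$ (the step-$r$ vertex of such a chain lies in $\mathcal{A}_r$, which constrains its $y$-coordinate and leaves only two possibilities for the final step).

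With (i) and (ii) in hand, I would define $\SS_1$ and $\SS_2$ by recording, at each rank $r$, the extremal elements $u_r := (r - b_r, b_r)$ and $l_r := (r - a_r, a_r)$ respectively. Fact (ii) makes $u_0, \ldots, u_{m+n}$ and $l_0, \ldots, l_{m+n}$ into \emph{bona fide} north-east paths from $(0,0)$ to $(m,n)$. A cell $[(i,j),(i+1,j+1)]$ belongs to $\Pc$ exactly when all four of its corners lie in $L$, and fact (i) translates this into the condition that the cell is enclosed by $(\SS_1, \SS_2)$. The main obstacle, and the step where the simplicity hypothesis becomes essential, is verifying that $\SS_1$ genuinely \emph{lies above} $\SS_2$ in the sense of the paper's definition, i.e.\ $u_r \neq l_r$ for $0 < r < m+n$; this is equivalent to $|\mathcal{A}_r| \geq 2$ at each interior rank, which is precisely the simplicity of $L$.
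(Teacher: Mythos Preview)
Your argument is correct and follows the same underlying strategy as the paper: in both proofs the two boundary paths of the parallelogram polyomino are recovered as the two extremal maximal chains of the lattice. The paper defines the \emph{uppermost chain} $\mm_0$ by a column-wise maximality condition and then simply asserts that $(\mm_0,\mm'_0)$ bounds $\Pc$, whereas you build the same chains rank-by-rank via the antichains $\mathcal{A}_r$ and their extremal $y$-coordinates $a_r,b_r$. Your facts (i) and (ii) make explicit what the paper leaves to the reader, and your observation that the strict inequality $u_r\neq l_r$ for $0<r<m+n$ is exactly the simplicity hypothesis pinpoints where that assumption enters, something the paper's proof does not spell out. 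One small comment: your sketch of (i) (``pin down the two opposite corners \ldots\ invoke the chain condition to fill in the sides'') works, but the cleanest way to finish it is to note that any saturated chain between two lattice points with the same $x$-coordinate (resp.\ $y$-coordinate) must consist entirely of north (resp.\ east) steps, so the full boundary of the rectangle $[u_r\wedge l_r,\,u_r\vee l_r]$ lies in $L$, and then each interior diagonal point is a join of a left-edge point and a bottom-edge point.
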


 \begin{proof}
 
 Let $\Pc=(\SS_1, \SS_2)$ be a parallelogram polyomino. By a translation, we may assume that $\SS_1$ and $\SS_2$ meet at $(0,0)$ and $(m,n)$.  The definition of parallelogram polyomino together with Proposition~\ref{prop:convex=lattice} yields that $\Pc$ is a simple planar distributive lattice. 
 
 To show the converse, assume that $\Pc$ is a simple planar distributive lattice with bounding box $[(0,0), (m,n)]$. It follows from the definition of simple planar distributive lattice  that $\PP$ is convex polyomino. Note that $\rank(\Pc)=m+n$ as a lattice. Let $\mm_0: x_0<x_1<\cdots<x_{m+n-1}< x_{m+n}$ be the maximal chain of $\Pc$ with $x_t=(i_t,j_t)$ for all 
$0\leq t\leq m+n$ satisfying the following property: $(i_0,j_0)=(0,0)$, $(i_{m+n},j_{m+n})=(m,n)$, and for any $(k,\ell)\in V(\PP)$ with $k=i_t$ for some $t,$ if $l \geq j_t$ then $(k,l)=(i_s,j_s)$ for some $s\geq t$. We call such an $\mm_0$  the uppermost chain of $\Pc$. Similarly, one can define the lower most chain $\mm'_0$ of $\Pc$. Then it can be easily seen that $\Pc$ is a parallelogram polyomino determined by $(\mm_0, \mm'_0)$. 
 \end{proof}
 
 The following remark plays a vital role in subsequent text. 
 \begin{Remark}\label{rem:sameideal}
 Let $L$ be a simple planar distributive lattice and $a,b \in L$ be two incomparable elements in $L$. Let $c=a\vee b$ and $d=a \wedge b$. Then $a$ and $b$ determine an inner interval in $L$ with diagonal corners $c$ and $d$ and antidiagonal corners $a$ and $b$. Therefore, a typical generator $f_{ab}=x_ax_b-x_{a\vee b}x_{a\wedge b}= x_ax_b-x_cx_d$ of the join-meet ideal of $L$ is also an inner 2-minor of $L$. Similarly, any inner 2-minor of $L$ can be interpreted as a relation arising from two incomparable elements and their meet and join. This shows that the join-meet ideal and polyomino ideals of $L$ coincide.   
 \end{Remark}

\section{Hilbert series and rook complex of simple polyominoes}\label{sec:rookcomp}

In this section, we give a conjecture about the Hilbert series of the coordinate ring of simple polyominoes in terms of some rook arrangements on the cells of the parallelogram polyominoes. We first recall the definition of Hilbert function and Hilbert series (see also \cite{HHO,Vi}).

Let $R$ be a standard graded ring and $I$ be a homogeneous ideal. The \textit{Hilbert function} $\H_{R/I} : \mathbb{N} \rightarrow \mathbb{N}$ is defined by 
\[
\H_{R/I} (k) := \dim_K (R/I)_k
\]
where $(R/I)_k$ is the $k$-degree component of the gradation of $R/I$, while the \\ \emph{Hilbert-Poincar\'e series} of $R/I$ is
\[
\HS_{R/I} (t) := \sum_{k \in \NN} \H_{R/I}(k) t^k. 
\]
By the Hilbert-Serre theorem, the Hilbert-Poincar\'e series of $R/I$ is a rational function. In particular, by reducing this rational function we get
\[
\HS_{R/I}(t) = \frac{h(t)}{(1-t)^d}.
\]
for some $h(t) \in \mathbb{Z}[t]$, where $d$ is the Krull dimension of $R/I$. The degree of $\HS_{R/I}(t)$ as a rational function, namely $\deg h(t)-d$, is called \emph{a-invariant} of $R/I$, denoted by $a(R/I)$. It is known that whenever $R/I$ is Cohen-Macaulay we have $a(R/I)=\reg R/I-\depth R/I$, that is $\reg R/I=\deg h(t)$. In the latter $\reg R/I$ denotes the \emph{Castelnuovo-Mumford regularity} of $R/I$.

The well-known ``rook problem'' is the problem of enumerating the number of ways of placing $k$ non-attacking rooks on a pruned chessboard. Every simple polyomino $\PP$ can be viewed as a pruned chessboard. Given a simple polyomino $\PP$, recall that $k$ rooks placed on the cells of $\PP$ are said to be \emph{non-attacking} if they do not lie on the same row or the same column of cells of $\PP$, pairwise.  The maximum number of non-attacking rooks that can be placed on $\PP$, is called the \emph{rook number} of $\PP$, denoted by $r(\PP)$. By abuse of notation, one identifies the rooks that can be placed on $\PP$ with the cells of $\PP$. We observe that for any cell $C \in \PP$, the set $\{C\}$ is a $1$ non-attacking rook. Moreover, for any set of non-attacking rooks $F \subset \PP$, the subset $G\subset F$ is also a set of non-attacking rooks. This yields that the set $\RR$ of sets of non-attacking rooks is a simplicial complex and 
\[
\mathcal{R}=\RR_0 \cup \RR_1 \cup \ldots \cup \RR_{r(\PP)},
\]
where for any $k=0,\ldots, r(\PP)$, $\RR_k$ contains the sets of $k$ non-attacking rooks, with $\RR_0=\varnothing$. Set $r_k = |\RR_k|$. The polynomial 
\[
r_{\PP}(t)=\sum_{k=0}^{r(\PP)} r_k t^k
\] 
is called the {\em rook polynomial} of $\PP$. The rook polynomials are widely studied in combinatorics. We refer to \cite{GG}, for more information on this topic. 
 
Next, we introduce an equivalence relation on the set $\RR_k$ for $2 \leq k \leq r(\PP)$. For this aim, we define the following.

\begin{Definition}\label{def:equivrook}
Two non-attacking rooks $R_1$ and $R_2$ of $\PP$ are \emph{switching rooks} if they are diagonal (resp. antidiagonal) cells of a rectangle of $\PP$.  Let $R_1'$ and $R_2'$ be the antidiagonal (resp. diagonal).
Observe that if $F\in \RR$  and $R_1,R_2 \in F$ are switching rooks, then the set $F'\setminus\{R_1,R_2\}\cup \{R_1',R_2'\}\in \RR$. The replacement of $R_1$ and $R_2$ by $R_1'$ and $R_2'$ is called \emph{switch} of $R_1$ and $R_2$. \\
There exists a natural equivalence relation $\sim$ on $\RR_k$ given as:  $F_1,F_2 \in \RR_k$  are \emph{equivalent} if one can obtain $F_2$ from $F_1$ after some switches. We define the quotient set
\[
\widetilde{\RR}_k=\RR_k/\sim.
\] 
We observe that the rook number $r(\PP)$ does not change. We define the polynomial
\[
\widetilde{r}_{\PP}(t)=\sum\limits_{k=0}^{r(\PP)} |\widetilde{\RR}_k|t^k.
\]
\end{Definition}
With the notation introduced above, we state the following:
\begin{Conjecture}\label{conj:rtilde}
Let $\PP$ a simple polyomino. Then $h(t)=\widetilde{r}_{\PP}(t)$.
\end{Conjecture}

The following example depicts the construction of a rook complex $\mathcal{R}$ of a polyomino and the quotient set $\widetilde{\RR}:=\RR / \sim $. 
\begin{Example}
We describe $\RR$ and $\widetilde{\RR}$ for the simple polyomino $\PP$ in Figure \ref{fig:rookcom}. The polyomino $\PP$ consists of seven cells labelled as $A,B,C,D,E,F,G$ and  $r(\PP)=3$. The rook complex $\RR=\RR_0 \cup \RR_1 \cup \RR_2 \cup \RR_3$ of $\PP$ is given below.
\begin{itemize}
    \item[$\RR_0=$] \{$\varnothing$\}
    \item[$\RR_1=$]\{ $\{A\}$, $\{B\}$, $\{C\}$, $\{D\}$, $\{E\}$, $\{F\}$, $\{G\}$\}
    \item[$\RR_2=$] \{$\{A,D\}$, $\{A,E\}$, $\{A,G\}$, $\{B, C\}$, $\{B, E\}$, $\{B,F \}$, $\{B, G\}$, $\{C,G\}$, $\{D,F\}$, $\{D,G\}$, $\{E,F\}$, $\{F,G\}$\}
    \item[$\RR_3=$] \{$\{A,D,G\}$, $\{B, C, G\}$, $\{B, E, F\}$, $\{B,F, G\}$, $\{D,F,G\}$\}.
\end{itemize}
This gives,
\[
r_{\PP}(t)=1+7t+12t^2+5t^3
\]

We observe that $A$ and $D$ are switching rooks and they can be switched with $B$ and $C$. Then
\[
\{A,D\} \sim \{B,C\}, \ \{A,D,G\}\sim \{B,C,G\}
\]
and 
\[
\widetilde{r}_{\PP}(t)=1+7t+11t^2+4t^3.
\]
By using \texttt{Macaulay2}, one can see that $h(t)=\widetilde{r}_{\PP}(t)$.
\begin{figure}[H]
  \centering
    \resizebox{0.3\textwidth}{!}{
  \begin{tikzpicture}
 \draw (0,0)--(2,0);
 \draw (0,1)--(3,1);
 \draw (0,2)--(3,2);
 \draw (0,3)--(1,3);
 \draw (2,3)--(3,3);
 
\draw (0,0)--(0,3);
\draw (1,0)--(1,3);
\draw (2,0)--(2,3);
\draw (3,1)--(3,3);

\node at (0.5,0.5){$A$};
\node at (1.5,0.5){$B$};
\node at (0.5,1.5){$C$};
\node at (1.5,1.5){$D$};
\node at (2.5,1.5){$E$};
\node at (0.5,2.5){$F$};
\node at (2.5,2.5){$G$};
   \end{tikzpicture}}
   \caption{A simple polyomino}\label{fig:rookcom}
\end{figure}
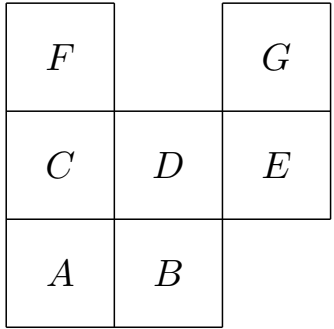

\end{Example}

 As proved in \cite{RR}, Conjecture \ref{conj:rtilde} holds true for the class of simple thin polyominoes. In fact, by definition of a thin polyomino, it does not contain a square tetromino (see \cite{RR}) as a subpolyomino. Therefore, a simple thin polyomino $\PP$ does not contain switching rooks and $\widetilde{r}_{\PP}(t)= r_{\PP}(t)=h(t)$. Moreover, by computational approach we obtain the following 
 \begin{Theorem}
 Let $\PP$ be a simple polyomino with $\rank \PP \leq 11$. Then $h(t)=\widetilde{r}_{\PP}(t)$.
 \end{Theorem}
\begin{proof}
To prove the claim we have implemented a computer program that, for a fixed
number $n$, performs the following steps:
\begin{itemize}
    \item[(S1)] compute the set of all the simple polyominoes of rank $n$;
    \item[(S2)] for any polyomino in (S1) compute the polynomial $h(t)$;
    \item[(S3)] for any polyomino in (S1) compute the polynomial $\widetilde{r}_\PP(t)$;
    \item[(S4)] check whether the polynomial from step (S2) is equal to the polynomial from step (S3).
\end{itemize}

In particular, for step (S1) we slightly modified the implementation given in \cite{MRRsite}. For step (S2) we used the {\tt Macaulay2} functions for the Hilbert series. For step (S3), we constructed the rook complex $\RR$ as the independence complex of the graph $G$ with $V(G)=\{C : C \in \PP \}$ and 
\[
E(G)=\{\{C,D\} : \mbox{the cells $C$ and $D$ lie on the same row or column}\},
\]
and then, by introducing the equivalence relation, we constructed $\widetilde{\RR}$. Finally we refer the reader to \cite{QRR} for a complete description of the algorithm that we used.
\end{proof}

Now we will prove Conjecture \ref{conj:rtilde} for parallelogram polyominoes.

\begin{Theorem}\label{theo:hilbparall}
Let $\PP$ a parallelogram polyomino. Then $h(t)=\widetilde{r}_{\PP}(t)$.
\end{Theorem}

From Proposition \ref{prop:parallel=simplelattice}, we know that a parallelogram polyomino can be seen as a simple planar distributive lattice. Furthermore, Remark~\ref{rem:sameideal} shows that the join-meet ideal and polyomino ideal of a simple planar distributive lattice coincides. To achieve our aim, we will first recall some notions related to simple planar distributive lattices and their Hilbert series. 

Let $x, y \in L$ such that $y$ covers $x$, that is, $x <y$ and there is no $z \in L$ such that $x<z<y$. Then the edge between $x$ and $y$ in the Hasse diagram of $L$ can be represented by $x \rightarrow y$. Recall from \cite{BGS} that an edge-labeling $\lambda$ of L is an integer labelling of the edges in Hasse diagram of $L$. Each chain in $L$, say $\mathfrak{c}: x_0 \rightarrow x_1 \rightarrow x_2 \rightarrow \ldots \rightarrow x_k$  can be labelled by a $k$-tuple $\lambda(\mathfrak{c})=(\lambda(x_0 \rightarrow x_1), \lambda(x_1 \rightarrow x_2),\ldots, \lambda(x_{k-1} \rightarrow x_k) )$. One can compare two such $k$-tuples $(a_1, \ldots , a_k)$ and $(b_1, \ldots , b_k)$ lexicographically, that is, $(a_1, \ldots , a_k) <_{\lex}(b_1, \ldots , b_k)$,  if the most-left nonzero component of the vector $(a_1-b_1, \ldots , a_k-b_k)$ is positive.

\begin{Definition}\cite[Definition 2.1]{BGS}
An edge labelling $\lambda$ of $L$ is called EL-labelling if for every interval $[x,y]$ in $L$, $\lambda$ satisfies the following:
\begin{enumerate}
\item there  is a unique chain $\mathfrak{c} : x=x_0 \rightarrow x_1 \rightarrow x_2 \rightarrow \ldots \rightarrow x_k=y$ such hat $\lambda(x_0 \rightarrow x_1) \leq \lambda(x_1 \rightarrow x_2) \leq \ldots \leq \lambda(x_{k-1} \rightarrow x_k) )$.

\item for every other chain $\mathfrak{b} : x=y_0 \rightarrow y_1 \rightarrow y_2 \rightarrow \ldots \rightarrow y_k=y$, we have $\lambda (\mathfrak{b}) >_{\lex} \lambda (\mathfrak{c})$.
\end{enumerate}

\end{Definition}

\noindent
In Figure~\ref{fig:ELlab}, we give an illustration of  {\em EL}-labeling $\lambda$.

Let $\rank(L)=d+1$. Then for each maximal chain $\mathfrak{m}: \min{L}=x_0 \rightarrow x_1 \rightarrow x_2 \rightarrow \ldots \rightarrow x_{d+1}=\max(L)$, the {\em descent set} of $\mathfrak{m}$ is $D(\mathfrak{m})= \{i : \lambda(x_{i-1} \rightarrow x_i)  > \lambda(x_{i} \rightarrow x_{i+1}) \}$. Then by following \cite[Theorem 2.2]{BGS}, for any $S \subset [d]$, we set $\beta(S)$ to be the number of maximal chains $\mathfrak{m}$ in $L$ such that $D(\mathfrak{m})=S$. It is known from \cite{BGS}, that 
\[
\HS_{K[L]}(t)=\frac{h(t)}{(1-t)^{d+2}}
\]
where
 \[
 h(t)=\sum_{S \subset [d]} \beta(S) t^{|S|}.
 \]

Our main goal is to interpret $ h(t)$ in terms of $\widetilde{r}_{\PP}(t)$.

In the following, we recall the definition of uppermost chain (already used in the proof of Proposition \ref{prop:parallel=simplelattice}), adding a nice EL-labelling.

\begin{Definition}\label{def:EL} 
Let $L$ be a simple planar distributive lattice of rank $d+1$.
\begin{enumerate}
    \item  Let $\mm_0: x_0<x_1<\cdots<x_d< x_{d+1}$ be the maximal chain of $L$ with $x_t=(i_t,j_t)$ for all 
$0\leq t\leq d+1$ satisfying the following property: $(i_0,j_0)=(0,0)$, $(i_{d+1},j_{d+1})=\max L$, and for any $(k,\ell)\in L$ with $k=i_t$ for some $t,$ if $\ell \geq j_t$ then $(k,\ell)=(i_s,j_s)$ for some $s\geq t$. We call such an $\mm_0$  the \textit{uppermost chain} of $L.$ We label the edges of $\mm_0$ by $\lambda(x_t\to x_{t+1})=t+1$ for $0\leq t\leq d.$
    \item Let $x, y \in L$ such that $x < y$. Then the {\em uppermost chain} from $x$ to $y$ in $L$ is the uppermost chain of the sublattice $L\cap [x,y]$.
\end{enumerate}
\end{Definition}

Figure~\ref{uppermostchains}.(I) illustrates an example of an uppermost chain between two elements $x,y$ of $L$, while Figure~\ref{uppermostchains}.(II) illustrates an example of an uppermost chain of a lattice $L$. The uppermost chains are indicated by thick lines. 

\begin{figure}[H]
\centering
\begin{subfigure}{0.5\textwidth}
\centering
\begin{tikzpicture}
\draw (1,0)--(0,1);
\draw (1,0)--(2,1);
\draw[very thick] (2,1)--(1,2);
\draw (1,2)--(0,1);
\draw (2,1)--(3,2);
\draw (2,3)--(3,2);
\draw[very thick] (1,2)--(2,3);
\draw (2,3)--(3,2);
\draw (1,2)--(0,3);
\draw (0,3)--(1,4);
\draw (2,3)--(1,4);
\filldraw (2,1) circle (\rad) node [anchor=west]{$x$};
\filldraw (2,3) circle (\rad) node [anchor=west]{$y$};
\end{tikzpicture}
\caption{The uppermost chain between $x$ and $y$.}
\end{subfigure}%
\begin{subfigure}{0.5\textwidth}
\centering
\begin{tikzpicture}
\draw[very thick] (1,0)--(0,1);
\draw (1,0)--(2,1);
\draw (2,1)--(1,2);
\draw[very thick] (1,2)--(0,1);
\draw (2,1)--(3,2);
\draw (2,3)--(3,2);
\draw(1,2)--(2,3);
\draw (2,3)--(3,2);
\draw[very thick] (1,2)--(0,3);
\draw[very thick] (0,3)--(1,4);
\draw (2,3)--(1,4);
\filldraw (1,0) circle (\rad);
\filldraw (1,4) circle (\rad);
\end{tikzpicture}
\caption{The uppermost chain of $L$}
\end{subfigure}
\caption{Two examples of uppermost chains}\label{uppermostchains}
\end{figure}

In \cite{EQR}, the following {\em EL}-labelling is defined for simple planar distributive lattices. 

\begin{Definition}\label{def:HasseLabel}
We label all the edges in the Hasse diagram of $L$ as follows. If $i_{t+1}=i_t+1$, in other words if $x_t\to x_{t+1}$, is a horizontal edge, then we label by $t+1$ all the edges of $L$ of the form $(i_t,j)\to (i_{t+1},j)$. If $j_{t+1}=j_t+1$, that is, if $x_t\to x_{t+1}$ is a vertical edge, then we label by $t+1$ all the edges of $L$ of the form $(i,j_t)\to (i,j_{t+1})$. In \cite[Proposition 6]{EQR}, it is shown that $\lambda$ is an {\em EL}-labelling. 
\end{Definition}

In Figure~\ref{fig:ELlab}, we use Definition \ref{def:HasseLabel} for the   {\em EL}-labeling $\lambda$. The chain marked with thick line is the uppermost chain of $L$.

\begin{figure}[H]
  \centering
    \resizebox{0.4\textwidth}{!}{
  \begin{tikzpicture}
 
\draw[thick] (0,0) --  (3,0);
\draw[thick] (0,1) --  (4,1);
\draw[thick] (1,2) --  (4,2);
\draw[thick] (2,3) --  (4,3);

\draw[thick] (0,0) --  (0,1);
\draw[thick] (1,0) --  (1,2);
\draw[thick] (2,0) --  (2,3);
\draw[thick] (3,0) --  (3,3);
\draw[thick] (4,1) --  (4,3);

\node at (0,0.5)[anchor=east]{\bf\scriptsize 1};
\node at (0.3,1)[anchor=south]{\bf\scriptsize 2};
\node at (1,1.5)[anchor=east]{\bf\scriptsize 3};
\node at (1.3,2)[anchor=south]{\bf\scriptsize 4};
\node at (2,2.5)[anchor=east]{\bf\scriptsize 5};
\node at (2.3,3)[anchor=south]{\bf\scriptsize 6};
\node at (3.3,3)[anchor=south]{\bf\scriptsize 7};

\node at (1,0.5)[anchor=east]{\scriptsize 1};
\node at (2,0.5)[anchor=east]{\scriptsize 1};
\node at (3,0.5)[anchor=east]{\scriptsize 1};
\node at (0.3,0)[anchor=south]{\scriptsize 2};
\node at (2,1.5)[anchor=east]{\scriptsize 3};
\node at (3,1.5)[anchor=east]{\scriptsize 3};
\node at (4,1.5)[anchor=east]{\scriptsize 3};
\node at (1.3,1)[anchor=south]{\scriptsize 4};
\node at (1.3,0)[anchor=south]{\scriptsize 4};
\node at (3,2.5)[anchor=east]{\scriptsize 5};
\node at (4,2.5)[anchor=east]{\scriptsize 5};
\node at (2.3,2)[anchor=south]{\scriptsize 6};
\node at (2.3,1)[anchor=south]{\scriptsize 6};
\node at (2.3,0)[anchor=south]{\scriptsize 6};
\node at (3.3,2)[anchor=south]{\scriptsize 7};
\node at (3.3,1)[anchor=south]{\scriptsize 7};
   \end{tikzpicture}}
   \caption{The $EL$-labelling for a parallelogram polyomino}\label{fig:ELlab}
\end{figure}
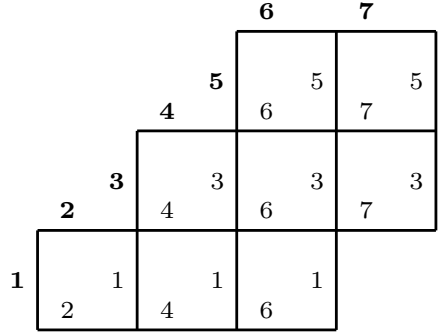


Throughout the following text, we will follow the {\em EL}-labelling given in Definition~\ref{def:EL}. The following remarks are immediate consequences of Definition~\ref{def:EL} (see Figure \ref{fig:ELlab}).

\begin{Remark}\label{rem:labelling}
\begin{enumerate}
\item Let $(i,j) \rightarrow (i,j+1)$ and $(k,l) \rightarrow (k,l+1)$ be two edges in $L$ with $i \leq k$ and $j+1 \leq l$. Then $\lambda ((i,j) \rightarrow (i,j+1)) < \lambda ((k,l) \rightarrow (k,l+1) )$.

\item  Let $(i,j) \rightarrow (i+1,j)$ and $(k,l) \rightarrow (k+1,l)$ be two edges in $L$ with $i+1 \leq k$ and $j \leq l$. Then $\lambda ((i,j) \rightarrow (i+1,j)) < \lambda ((k,l) \rightarrow (k+1,l) )$. 

\item   Let $(i,j) \rightarrow (i,j+1)$ and $(k,l) \rightarrow (k+1,l)$ be two edges in $L$ with $i \leq k$ and $j+1 \leq l$. Then $\lambda ((i,j) \rightarrow (i,j+1)) < \lambda ((k,l) \rightarrow (k+1,l) )$.

\item Let $(i,j) \rightarrow (i+1,j)$ and $(i+1,j) \rightarrow (i+1,j+1)$ be two edges in $L$. If $(i,j+1) \notin L$, then $(i,j) \rightarrow (i+1,j)$ and $(i+1,j) \rightarrow (i+1,j+1)$ appear in the uppermost chain of $L$ and  $\lambda ((i,j) \rightarrow (i+1,j)) < \lambda ((i+1,j) \rightarrow (i+1,j+1))$. However, if $(i,j+1) \in L$, then  $(i,j) \rightarrow (i,j+1)$ and $(i,j+1) \rightarrow (i+1,j+1)$  are edges in $L$. Moreover, due to (3) we have $\lambda ((i,j) \rightarrow (i,j+1)) < \lambda ((i,j+1) \rightarrow (i+1,j+1) )$. Following the Definition~\ref{def:EL}, we have 
\[
\lambda ((i,j) \rightarrow (i,j+1)) = \lambda ((i+1,j) \rightarrow (i+1,j+1))
\]
 and 
\[
  \lambda ((i,j+1) \rightarrow (i+1,j+1) ) =  \lambda ((i,j) \rightarrow (i+1,j) )
  \]
  which gives 
  \[
  \lambda((i,j) \rightarrow (i+1,j)) > \lambda ((i+1,j) \rightarrow (i+1,j+1))
  \]
 
  \item From (1)--(4), we can compute the descent set of a maximal chain $\mathfrak{m}$ in $L$. If $\mathfrak{m}$ contains edges of the form  $(i,j) \rightarrow (i+1,j)$ and $(i+1,j) \rightarrow (i+1,j+1)$ and $(i,j+1) \in L$, then we have a descent at $(i+1,j)$. 

  \item Let $x=(i,j), y=(p,q) \in L$ with $i<p$ and $j<q$ and let $\mathfrak c : x=x_0 < x_1 <\ldots < x_l=y$ be the uppermost chain between $x$ and $y$. It follows that if $(i,j+1)\in L$, then $x_1=(i,j+1)$. That is, in $\mathfrak{c}$ there are no descents. Similarly one proves that in an uppermost chain there are no descents. 
 \end{enumerate}
\end{Remark}


The following definition is needed for the Proposition \ref{prop:descent}.
\begin{Definition}\label{def:cell}
Let $C=[(i,j), (i+1,j+1)]$ be a cell in a simple planar distributive lattice $L$. Then the lower left corner $(i,j)$ of $C$ is denoted by $l(C)$. Given any maximal chain $\mathfrak{m}$ in a simple planar distributive lattice $L$, we say that $\mathfrak{m}$  has a descent at cell $C$ if $\mathfrak{m}$ passes through the edges $(i,j) \rightarrow (i+1,j)$ and $(i+1,j) \rightarrow (i+1,j+1)$. 
\end{Definition}

\begin{Proposition}\label{prop:descent}
Let $L$ be a simple planar distributive lattice. Then the following are equivalent.

\begin{enumerate}
    \item There exists a maximal chain $\mathfrak{m}$ in $L$ with $|D(\mm)|=r$. 
    \item There exists $C_1$, $C_2$, $\ldots$, $C_r$ cells of $L$  with $l(C_k)=(i_k,j_k)$ for $ 1 \leq k \leq r$
\[
i_1< i_2 < \ldots < i_r \mbox{ and } j_1< j_2 < \ldots < j_r. 
\]
\end{enumerate}
Observe that the chain $\mm$ has descents exactly at $C_1,\ldots C_r$.
\end{Proposition}
\begin{proof}
$(1)\Rightarrow (2)$ Let $\mm: x_0<x_1<\cdots<x_d< x_{d+1}$  be a maximal chain with descent set $D(\mathfrak m)=\{l_1,\ldots,l_r\}$ with $l_1 < \ldots < l_r$. Then $x_{l_1}< x_{l_2}< \ldots <x_{l_r}$. From Remark \ref{rem:labelling}.(5) and Definition \ref{def:cell}, for any $i \in \{1,\ldots, r\}$ there exists a cell $C_i$ such that lower right corner and the lower left corner of $C_i$ are $x_{l_i}$  and $x_{l_i-1}$, respectively. 
We now prove that for any $k=1,\ldots,r-1$  we have $i_k<i_{k+1}$ and $j_{k}<j_{k+1}$. From the fact that, $x_{l_{k}-1}<x_{l_{k+1}-1}$ it follows that $i_k \leq i_{k+1}$. By contraposition, assume that $i_k = i_{k+1}$ for some $k$, then we have that $x_{l_k-1}=(i_k,j_k)$, $x_{l_k}=(i_k+1,j_k)$ and $x_{l_{k+1}-1}=(i_k,j_{k+1})$. That is, $x_{l_k} \not < x_{l_{k+1}-1}$, hence $\mm$ is not a chain and this is a contradiction. Hence $i_k < i_{k+1}$ and similarly $j_k < j_{k+1}$. 

$(2)\Rightarrow (1)$  For a cell $C$ in $L$, let $r(C)$ be its lower right corner. Let $\mathfrak c_0$ be the uppermost chain between $(0,0)$ and $(i_1,j_1)$ and for $1\leq k \leq r-1$ let $\mathfrak{c}_{k}$ be the uppermost chain between $r(C_k)=(i_k+1,j_k)$ and $l(C_{k+1})=(i_{k+1},j_{k+1})$ and let $\mathfrak c_r$ be the uppermost chain between $r(C_{r})$ and $(m,n)$. From the concatenation of $\mathfrak c_0 \mathfrak c_1 \ldots \mathfrak c_r $, we obtain in a natural way a maximal chain $\mathfrak m$ of $L$. We prove that $\mathfrak{m}$ has descent at  $C_1, C_2, \ldots, C_r$. We fix $k\in \{1,\ldots,r\}$. Since  $(i_k,j_k)\rightarrow (i_k+1,j_k) \in E(\mathfrak m)$, it is sufficient to prove $(i_k+1,j_k)\rightarrow (i_k+1,j_k+1) \in E(\mathfrak m)$. The assertion follows from the inequalities on $i_1< \ldots < i_r$ and $j_1<\ldots < j_r$ and Remark \ref{rem:labelling}.(6) applied to the uppermost chain $\mathfrak c_{k+1}$. Therefore, $\mathfrak m$ has descent at $C_k$. This completes the proof.
\end{proof}

In order to prove Theorem \ref{theo:hilbparall}, we premise the following lemma which shows that given any set of non-attacking rooks in a parallelogram polyomino, one can find an equivalent set of non-attacking rooks whose lower left corners appear in a chain.

\begin{Lemma}\label{lem:ordrooks}
Let $\PP$ be a parallelogram polyomino and let $F=\{A_1,\ldots, A_d\}\in \RR$. Then there exists  $G=\{B_1,\ldots, B_d\}\in \RR$  with $l(B_k)=(i_k,j_k)$ for $ 1 \leq k \leq d$ such that 
\[
i_1< i_2 < \ldots < i_d \mbox{ and } j_1< j_2 < \ldots < j_d 
\]
and $F\sim G$.
\end{Lemma}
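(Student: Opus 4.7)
My plan is to reduce the statement to a bubble-sort argument on the column ordering of the rooks. Since $F$ is a set of non-attacking rooks, no two cells of $F$ share a column, so after relabeling I may assume $i_1 < i_2 < \cdots < i_d$. If in addition $j_1 < \cdots < j_d$, then $G = F$ works already. Otherwise I look for a pair of consecutive indices $k, k+1$ with $j_k > j_{k+1}$ and show that $A_k$ and $A_{k+1}$ can be switched; iterating this and measuring progress by the number of inversions $\mathrm{inv}(F) = \#\{(p,q) : p < q,\ j_p > j_q\}$ of the sequence $(j_1,\ldots,j_d)$ produces the required $G$ after at most $\binom{d}{2}$ switches.

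The key step is to verify that any such consecutive out-of-order pair really is a pair of switching rooks, that is, that the rectangle of cells $R = [(i_k, j_{k+1}),(i_{k+1}+1, j_k+1)]$ lies entirely inside $\PP$. This is where I exploit that $\PP = (\SS_1, \SS_2)$ is a parallelogram polyomino: since both bounding paths use only north and east steps, for each column index $i$ the set of rows with cell $(i,j) \in \PP$ is an interval $[\ell_c(i), u_c(i)]$, and both $\ell_c$ and $u_c$ are non-decreasing in $i$. Hence for any $i \in [i_k, i_{k+1}]$ and any $j \in [j_{k+1}, j_k]$,
\[
\ell_c(i) \leq \ell_c(i_{k+1}) \leq j_{k+1} \leq j \leq j_k \leq u_c(i_k) \leq u_c(i),
\]
so cell $(i,j) \in \PP$. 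Thus $R \subset \PP$, and because $i_k < i_{k+1}$ and $j_k > j_{k+1}$, the cells $A_k$ and $A_{k+1}$ are the antidiagonal cells of $R$.

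Performing the switch replaces $A_k, A_{k+1}$ by the diagonal cells of $R$, namely those with lower-left corners $(i_k, j_{k+1})$ and $(i_{k+1}, j_k)$. The columns $\{i_k, i_{k+1}\}$ and the rows $\{j_k, j_{k+1}\}$ used by the pair are preserved (merely re-paired), and no other rook of $F$ lies in any of these columns or rows, so non-attackingness is maintained and $F \sim F'$. In the column ordering, positions $k$ and $k+1$ now carry row values $j_{k+1} < j_k$, so $\mathrm{inv}(F') = \mathrm{inv}(F) - 1$; iterating yields the desired $G$. The only real obstacle is the containment $R \subset \PP$: for general convex polyominoes the boundary functions need not be monotone and the argument fails, but the parallelogram structure provides exactly the monotonicity required.
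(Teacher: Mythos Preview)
Your proof is correct. Both your argument and the paper's reduce to the same core fact---that in a parallelogram polyomino any two non-attacking rooks in antidiagonal position span an inner rectangle and can therefore be switched---but the packaging differs. The paper proceeds by induction on $d$ in a selection-sort style: it locates the rook with minimal row, switches it directly with the leftmost rook (a possibly non-adjacent swap, justified by the distributive-lattice structure of $V(\PP)$ via join/meet closure), and recurses on the remaining $d-1$ rooks. You instead run a bubble sort on consecutive inversions, tracking progress with the inversion count, and you justify the rectangle containment via the monotonicity of the column-boundary functions $\ell_c,u_c$ coming from the north-east paths. Your monotonicity argument is slightly more elementary and self-contained (it does not invoke Proposition~\ref{prop:parallel=simplelattice}), and in fact it applies equally well to non-adjacent pairs, so it would also support the paper's selection-sort strategy; conversely, the paper's lattice viewpoint makes the switch available for arbitrary pairs in one stroke, which is what allows its shorter induction.
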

\begin{proof}
Let $l(A_i)=(x_i,y_i)$ for $i =1,\ldots, d$.
We prove the assertion by applying induction on $d$.

Let $d=2$ and assume that $\mathcal{A}=\{A_1,A_2\}$ is labelled such that $x_1 < x_2$. If $y_1 < y_2$ then the statement holds trivially. If $y_1>y_2$, then by using the assumption that $\PP$ is a parallelogram polyomino and hence a distributive lattice, we conclude that the join $b_1=(x_2,y_1)$ and the meet $b_2=(x_1,y_2)$ of $\mathcal{A}_1$ and $\mathcal{A}_2$ belong to $V(\Pc)$. In particular, $b_1$ and $b_2$ are lower left corners of some cells of $\PP$ and $A_1$ and $A_2$ are antidiagonal cells of a rectangle of $\PP$. Let $B_1$ and $B_2$ be the cells having lower left corners $b_1$ and $b_2$, respectively. It follows that the set $\{B_1,B_2\}$ satisfies the assertion. 
Now, let $d>2$ and assume that the assertion is true for all of the sets containing $d-1$ non-attacking rooks.
We label the elements of $\mathcal{A}$ in a way such that $x_1< x_2< \ldots < x_d$. Let $k\in \{1,\ldots, d\}$ such that $y_k < y_{i}$ for any $i\neq k$. If $k=1$, then we set $B_1=A_1$ and we apply the inductive hypothesis on the set $\{A_2,\ldots , A_d\}$ to get the desired result.\\
If $k>1$, then let $B_1$ and $C_k$ be the cells whose lower left corners are respectively $(x_1,y_k)$ and $(x_k,y_1)$. Then $\{A_2,\ldots, A_{k-1},C_k,A_{k+1},\ldots, A_d\}$ is a set of $d-1$ non-attacking rooks and by applying the inductive hypothesis the assertion follows.
 \end{proof}
Now we state the proof of Theorem \ref{theo:hilbparall}.

\begin{proof}[Proof of Theorem \ref{theo:hilbparall}]
Let 
\[
\HS_{K[\PP]}(t)=\frac{\sum\limits_{k} h_{k}t^k}{(1-t)^{\dim K[\PP]}}.
\]
We show that for any $k$ one has $\widetilde{r}_k=h_k$. For $k=0,1$ one has $\widetilde{r}_k=h_k$.\\
For $k\geq 2$, by Proposition \ref{prop:descent}  the maximal chains with descent set of cardinality $k$ in $\PP$ seen as a planar distributive lattice are in bijection with the sets $F$ of non-attacking rooks $B_1,\ldots, B_k$ with $l(B_\ell)=(i_\ell,j_\ell)$ for $ 1 \leq \ell \leq k$ such that $i_1< i_2 < \ldots < i_k \mbox{ and } j_1< j_2 < \ldots < j_k$. Thanks to Lemma \ref{lem:ordrooks}, such sets $F$ are the representatives of the equivalence classes of $\RR_k / \sim$, that is $\widetilde{r}_k=h_k$.
\end{proof}

As a consequence of Theorem \ref{theo:hilbparall}, we observe that the Conjecture~\ref{conj:rtilde} holds for $L$-convex polyominoes, too. As stated in Section~\ref{sec:pre}, the coordinate ring of an $L$-convex polyomino is isomorphic to the coordinate ring of a suitable Ferrer diagram. Then the conclusion follows from the fact that every Ferrer diagram is a particular parallelogram polyomino. We also note that the Hilbert series of Ferrer diagram was given in \cite{CN}.

From Lemma \ref{lem:dim}, it follows that for a parallelogram polyomino $\PP$ the coordinate ring $K[\PP]$ is a Cohen-Macaulay domain. Furthermore, as mentioned at the beginning of this section, $\deg h(t)=\reg K[\PP]$. Hence we obtain the following corollary of Theorem \ref{theo:hilbparall}.
 \begin{Corollary}
 Let $\PP$ be a parallelogram polyomino. Then $\reg K[\PP] = r(\PP)$.
 \end{Corollary}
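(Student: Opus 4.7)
The plan is to chain together three facts, all of which are either stated in the excerpt or follow immediately from what the excerpt recalls. First I would invoke Lemma~\ref{lem:dim} to conclude that $K[\PP]$ is a Cohen--Macaulay domain, so the $a$-invariant formula recalled in Section~\ref{sec:rookcomp} gives $\reg K[\PP]=\deg h(t)$, where $h(t)$ is the numerator of the reduced Hilbert series $\HS_{K[\PP]}(t) = h(t)/(1-t)^{\dim K[\PP]}$.

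Next I would apply Theorem~\ref{theo:hilbparall} to rewrite $h(t)=\widetilde{r}_{\PP}(t)$, so that $\reg K[\PP] = \deg \widetilde{r}_{\PP}(t)$. It therefore suffices to show $\deg \widetilde{r}_{\PP}(t) = r(\PP)$. Since $\widetilde{r}_{\PP}(t)=\sum_{k=0}^{r(\PP)}|\widetilde{\RR}_k|t^k$ by Definition~\ref{def:equivrook}, the degree is at most $r(\PP)$, and the reverse inequality follows once we show $|\widetilde{\RR}_{r(\PP)}| \geq 1$.

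The latter is immediate: by the very definition of the rook number there exists at least one arrangement $F\in \RR_{r(\PP)}$ of $r(\PP)$ pairwise non-attacking rooks on $\PP$, so $\RR_{r(\PP)}\neq \varnothing$, and its image in the quotient $\widetilde{\RR}_{r(\PP)}=\RR_{r(\PP)}/\!\sim$ is a non-empty equivalence class. Hence $|\widetilde{\RR}_{r(\PP)}|\geq 1$, and stringing the equalities together yields $\reg K[\PP]=r(\PP)$.

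There is essentially no obstacle here; the only thing to verify carefully is that the leading coefficient of $\widetilde r_{\PP}(t)$ does not vanish, which is a direct triviality since the equivalence relation $\sim$ on $\RR_k$ never collapses a non-empty set to the empty set (switches send $k$-rook configurations to $k$-rook configurations). The statement is therefore a clean corollary of Theorem~\ref{theo:hilbparall} combined with Cohen--Macaulayness.
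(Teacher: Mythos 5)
Your proposal is correct and follows essentially the same route as the paper: Cohen--Macaulayness from Lemma~\ref{lem:dim} gives $\reg K[\PP]=\deg h(t)$, and Theorem~\ref{theo:hilbparall} identifies $h(t)$ with $\widetilde{r}_{\PP}(t)$, whose degree is $r(\PP)$. Your extra remark that the leading coefficient $|\widetilde{\RR}_{r(\PP)}|$ is nonzero is a small point the paper leaves implicit, and it is verified exactly as you say.
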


\section{Gorenstein parallelogram polyominoes}\label{sec:gor}
Given a polyomino $\PP$, we call $\PP$ Gorenstein if $K[\PP]$ is Gorenstein. In this section we discuss the Gorenstein parallelogram polyominoes. Although the Gorenstein distributive lattices are completely characterized in \cite{H},  we plan to give a combinatorial interpretation of the Gorenstein parallelogram polyominoes in the language of polyominoes. Our aim is to compare the conditions on a parallelogram polyomino to be Gorenstein with the conditions found in \cite{EHQR} for $L$-convex polyominoes and in \cite{RR} for simple thin polyominoes. 

Let $\MM(\PP)$ be the set of the maximal rectangles of $\PP$. We generalize \cite[Definition 4.1]{RR} with the following.

\begin{Definition}
Let $S$ be a rectangular (resp. square) subpolyomino of a parallelogram polyomino $\PP$. Then $S$ is said to be \emph{single} if there exists a unique maximal rectangle $R \in \MM(\PP)$ such that $S \subseteq R$ and $S \cap R' =\varnothing$ for all $R' \in \MM(\PP)$ with $R'\neq R$. We say that $\PP$ has the \emph{$S$-property} if each maximal rectangle $R$ of $\PP$ has a unique single square.
\end{Definition}

To see an illustration of the above definition, consider the parallelogram polyomino $\PP$ given in Figure~\ref{fig:shornotwell}.(\textsc{\romannumeral 1}). $\Pc$ has six maximal rectangles
\[
\{A,B\},\{B,C,E\}, \{C,D,E,F,\}, \{D,F,H\}, \{E,F,G\}, \{F,G,H,I\}
\]
The maximal rectangle $\{A,B\}$ has $A$ as its single square, and the maximal rectangle $\{F,G,H,I\}$ has $I$ as its single square. However, other rectangles do not have a single square or a single rectangle because each of their cells belong to other rectangles as well. The maximal rectangles $\{A,B\}$ and $\{F,G,H,I\}$ are special in a sense that $\{A,B\}$  is the unique maximal rectangle containing $\min (\PP)$ (as a distributive lattice) and  $\{F,G,H,I\}$ is the unique maximal rectangle containing $\max (\PP)$.

Next, we prove that if a maximal rectangle $R$ in a parallelogram polyomino $\PP$ contains either $\min(\PP)$ or $\max(\PP)$ then $R$ must contain a single rectangle. Given a parallelogram polyomino $\PP$, we set $\min (\PP)=(0,0)$ throughout the following text. 

\begin{Lemma}\label{lem:uniquerec}
Let $\PP$ be a parallelogram polyomino. Then there exists a unique $R \in \MM(\PP)$ such that $(0,0) \in V(R)$. In particular, the maximal rectangle $R$ has a single rectangle.
\end{Lemma}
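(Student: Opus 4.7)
The plan is to identify $R$ explicitly. Since $\PP$ is a parallelogram polyomino with $(0,0)=\min\PP$, its upper path $\SS_1$ and lower path $\SS_2$ both emanate from $(0,0)$; the ``$\SS_1$ lies above $\SS_2$'' condition forces $\SS_1$ to begin with a north step and $\SS_2$ with an east step. Let $a\geq 1$ and $b\geq 1$ be the lengths of the initial north-run of $\SS_1$ and of the initial east-run of $\SS_2$, respectively, so that the first column of $\PP$ consists of the cells with lower-left corners $(0,0),\ldots,(0,a-1)$ and the first row of $\PP$ of the cells with lower-left corners $(0,0),\ldots,(b-1,0)$. My candidate is $R=[(0,0),(b,a)]$.

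First I would show $R\subseteq \PP$. By Proposition~\ref{prop:parallel=simplelattice}, $V(\PP)$ is a sublattice of $\NN^2$, so $(i,j)=(i,0)\vee(0,j)\in V(\PP)$ for every $0\leq i\leq b$ and $0\leq j\leq a$, and simplicity of $\PP$ then promotes this to the assertion that each unit cell inside $[(0,0),(b,a)]$ is a cell of $\PP$. Equivalently, on the range $x\in[0,b]$ the path $\SS_2$ stays at height $0$ while $\SS_1$ has already reached height $a$ and never descends, so each such unit cell lies strictly between the two paths. Maximality of $R$ in $\MM(\PP)$ is then immediate: extending one column right would require the cell at $(b,0)$, but $\SS_2$ turns north there; extending one row up would require the cell at $(0,a)$, but $\SS_1$ turns east there.

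For uniqueness, any $R'\in\MM(\PP)$ with $(0,0)\in V(R')$ has $(0,0)$ as its lower-left corner because $(0,0)=\min\PP$, so $R'=[(0,0),(s,t)]$; the memberships $(s-1,0),(0,t-1)\in \PP$ force $s\leq b$ and $t\leq a$, whence $R'\subseteq R$ and maximality gives $R'=R$. For the ``single rectangle'' conclusion, take $S$ to be the cell with lower-left corner $(0,0)$: any maximal rectangle of $\PP$ meeting $S$ contains the vertex $(0,0)$ and hence equals $R$ by the uniqueness just proved, so $S$ is a single (square) subpolyomino of $R$. The only delicate point I foresee is promoting ``all four corners of a unit cell are in $V(\PP)$'' into ``the cell is in $\PP$''; this relies on simplicity of $\PP$ to exclude interior holes, or alternatively on the row- and column-convexity of $\PP$ together with the fact that membership of the cells in the first row and first column is known by construction of $a$ and $b$.
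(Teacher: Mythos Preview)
Your proof is correct. The paper takes a different, shorter route for uniqueness: it argues by contradiction that if two distinct maximal rectangles $R=[(0,0),(a,b)]$ and $R'=[(0,0),(c,d)]$ both contain the origin, then the lattice join $(c,b)=(a,b)\vee(c,d)$ lies in $V(\PP)$, so $[(0,0),(c,b)]$ is a strictly larger rectangle of $\PP$, contradicting maximality. It never identifies $R$ explicitly. Your approach instead constructs $R$ concretely from the initial north-run of $\SS_1$ and east-run of $\SS_2$, verifies maximality directly, and then deduces uniqueness by showing any competitor is contained in $R$. The paper's argument is more economical; yours yields extra information (the exact dimensions of $R_0$ in terms of the boundary paths), which is in fact what the paper needs later in Lemma~\ref{lem:RRR} and Lemma~\ref{lem:shorten}. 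For the ``single rectangle'' clause both proofs are essentially identical: the cell at the origin can only lie in $R$. Your caveat about promoting vertices of a unit cell to membership of the cell is well taken and is handled cleanly by your alternative path-based description (or by row/column convexity); the paper's proof implicitly relies on the same fact when asserting that $[(0,0),(c,b)]$ is a rectangle of $\PP$.
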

\begin{proof}
By contraposition, assume that there are two distinct maximal rectangles $R,R'$ of $\PP$, such that $(0,0)\in V(R)\cap V(R')$. Let $a,b,c,d \in V(\PP)$ be such that $V(R)=[(0,0),(a,b)]$ and $V(R')=[(0,0),(c,d)]$. Since $R$ and $R'$ are distinct, without loss of generality, we may assume that $a <c$ and $b>d$. From Proposition~\ref{prop:parallel=simplelattice}, it follows that $\PP$ is a simple planar distributive lattice. Therefore, $(c,b)\in V(\PP)$ because it is the join of $(a,b)$ and $(c,d)$. This shows that the rectangle $\widetilde{R}$ with $V(\widetilde{R})=[(0,0),(c,b)]$ contains both $R$ and $R'$, a contradiction to the maximality of $R$ and $R'$. Therefore, we conclude that there exists a unique maximal rectangle $R$ that contains $(0,0)$. In addition, we obtain that the cell with lower left corner $(0,0)$ only belongs to $R$. This shows that $R$ must have a single rectangle.
\end{proof}

In the following text, for a given parallelogram polyomino $\PP$, the unique maximal rectangle of $\PP$ containing  $\min (\PP) =(0,0)$ is denoted by $R_0$. Let $\PP'$ be a subpolyomino of $\PP$. Then $\PP\setminus \PP'$ is a collection of cells obtained by removing all cells of $\PP'$ from $\PP$. Next, we introduce a new family of parallelogram polyominoes.

\begin{Definition}
A parallelogram polyomino $\PP$ is said to be \emph{shortenable} if $\PP \setminus R_0$ is a parallelogram polyomino.  Moreover, $\PP$ is \emph{well-shortenable} if $\PP$ is shortenable and either $\PP\setminus R_0$ is a rectangle or $\PP\setminus R_0$ is a well-shortenable parallelogram polyomino. The sequence of polyominoes $\{\PP_i\}_{i=1,\ldots, l}$ such that $\PP_1=\PP\setminus R_0$, and $\PP_{i+1}=\PP_i\setminus R_i$  where $R_i$ is the unique rectangle containing $\min (\PP_i)$, is called the \emph{derived sequence} of $\PP$.
\end{Definition}

We observe that a thin parallelogram polyomino and an $L$-convex parallelogram polyomino (Ferrer diagram) are well-shortenable.
In particular, for a Ferrer diagram the definition of  derived sequence coincides with the one of \cite{EHQR}.
\begin{Example}
We give an example of a shortenable polyomino that is not well-shortenable. Let $\PP$ be the parallelogram polyomino in Figure \ref{fig:shornotwell}.(\textsc{\romannumeral 1}). We observe that the maximal rectangle $R_0$ of $\PP$ is the maximal rectangle on the cells $A$ and $B$, and the polyomino $\PP_1= \PP\setminus R_0$ is a parallelogram polyomino (see Figure \ref{fig:shornotwell}.(\textsc{\romannumeral 2})). Then $\PP$ is shortenable. However, the rectangle $R_1$ on the cells $\{C,D,E,F\}$ in $\PP_1$ is such that $\PP_1\setminus R_1$ is not a parallelogram polyomino (without rotation), see Figure \ref{fig:shornotwell}.(\textsc{\romannumeral 3}).
 
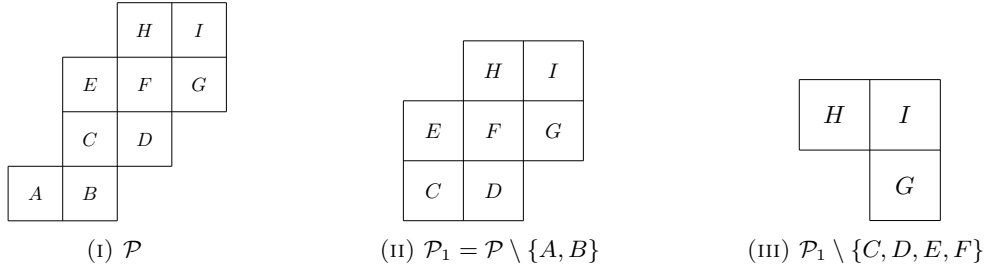
\begin{figure}[H]
  \centering
  \begin{subfigure}[t]{0.333 \textwidth}
  \centering
    \resizebox{0.6\textwidth}{!}{
  \begin{tikzpicture}
 \draw (0,0)--(2,0);
 \draw (0,1)--(3,1);
 \draw (1,2)--(4,2);
 \draw (1,3)--(4,3); 
 \draw (2,4)--(4,4);

 \draw (0,0)--(0,1);
 \draw (1,0)--(1,3);
 \draw (2,0)--(2,4);
 \draw (3,1)--(3,4);
 \draw (4,2)--(4,4);

\node at (0.5,0.5){$A$};
\node at (1.5,0.5){$B$};
\node at (1.5,1.5){$C$};
\node at (2.5,1.5){$D$};
\node at (1.5,2.5){$E$};
\node at (2.5,2.5){$F$};
\node at (3.5,2.5){$G$};
\node at (2.5,3.5){$H$};
\node at (3.5,3.5){$I$};
   \end{tikzpicture}}
   \caption{$\PP$}
   \end{subfigure}%
   \begin{subfigure}[t]{0.333 \textwidth}
   \centering
    \resizebox{0.5\textwidth}{!}{
  \begin{tikzpicture}
 \draw (1,1)--(3,1);
 \draw (1,2)--(4,2);
 \draw (1,3)--(4,3); 
 \draw (2,4)--(4,4); 

 \draw (1,1)--(1,3);
 \draw (2,1)--(2,4);
 \draw (3,1)--(3,4);
 \draw (4,2)--(4,4);

\node at (1.5,1.5){$C$};
\node at (2.5,1.5){$D$};
\node at (1.5,2.5){$E$};
\node at (2.5,2.5){$F$};
\node at (3.5,2.5){$G$};
\node at (2.5,3.5){$H$};
\node at (3.5,3.5){$I$};
   \end{tikzpicture}}
      \caption{$\PP_1=\PP\setminus \{A,B\}$}
   \end{subfigure}%
   \begin{subfigure}[t]{0.333 \textwidth}
   \centering
    \resizebox{0.4\textwidth}{!}{
  \begin{tikzpicture}
 \draw (3,2)--(4,2);
 \draw (2,3)--(4,3); 
 \draw (2,4)--(4,4);

 \draw (2,3)--(2,4);
 \draw (3,2)--(3,4);
 \draw (4,2)--(4,4);

\node at (3.5,2.5){$G$};
\node at (2.5,3.5){$H$};
\node at (3.5,3.5){$I$};
   \end{tikzpicture}}
    \caption{$\PP_1\setminus \{C,D,E,F\}$}
   \end{subfigure}
   \caption{A shortenable polyomino that is not well-shortenable }\label{fig:shornotwell}
\end{figure}

\end{Example}

In order to characterize the parallelogram polyominoes that are shortenable, we prove the following. 

\begin{Lemma}\label{lem:RRR}
Let $\PP$ be a parallelogram polyomino. Assume that $R_0$ has size $s\times t$ and its single rectangle $R$ has size $s'\times t'$ with $s'<s$ and $t' < t$. Then there exist $R',R'', \widetilde{R} \in \MM(\PP)$ as in Figure \ref{fig:rectproof}. 
\begin{figure}[H]
  \centering
    \resizebox{0.4\textwidth}{!}{
  \begin{tikzpicture}
 \draw (0,0)--(3,0);
 \draw (0,1)--(4,1);
 \draw (0,2)--(4,2);
\draw (2,3)--(4,3);

 \draw (0,0)--(0,2);
 \draw (2,0)--(2,3);
 \draw (3,0)--(3,3);
 \draw (4,1)--(4,3);

 \draw[pattern=north west lines] (0,1) rectangle (4,2);
 \draw[pattern=north east lines] (2,0) rectangle (3,3);
 
 \filldraw (0,0) circle (\rad) node [anchor=north]{$(0,0)$};
 
 \node at (1,0.5){$R$};
  \node at (-0.3,1){$R_0$};
  \node at (4.3,1.5){$R'$};
  \node at (2.5,-0.3){$R''$};
    \node at (3,3.3){$\widetilde{R}$};
   \end{tikzpicture}}
   \caption{}\label{fig:rectproof}
   
   \end{figure}%
\end{Lemma}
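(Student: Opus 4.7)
The plan is to produce each of $R''$, $R'$, and $\widetilde R$ as a maximal rectangle through a distinguished cell of $R_0\setminus R$, using the single property of $R$ together with the parallelogram structure of $\PP$. As a setup, I fix coordinates so that $R_0=[(0,0),(s,t)]$ and, by Lemma~\ref{lem:uniquerec} (which shows that the cell with lower-left $(0,0)$ belongs only to $R_0$), take $R=[(0,0),(s',t')]$ — the single rectangle sits at the lower-left corner of $R_0$ as drawn. From the parallelogram structure I would record the monotonicities: the column heights $h_i$ along the upper path $\SS_1$ and the column-starts $\ell_i$ along the lower path $\SS_2$ are non-decreasing in $i$, with analogous monotonicity for row widths $r_j$. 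The maximality of $R_0$ then forces $h_0=t$ and $\ell_s\geq 1$ (that is, $(0,t)\notin\PP$ and $(s,0)\notin\PP$), and the single property translates into the profile identifications $s'-1=\max\{i:h_i=t\}$ and $t'-1=\max\{j:r_j=s\}$; in particular $h_{s'}>t$ and $r_{t'}>s$.

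For $R''$, the cell $(s',0)\in R_0\setminus R$ lies in some maximal rectangle $R''\neq R_0$. Since $R\cap R''=\emptyset$ and $(s'-1,0)\in R$, the rectangle $R''$ cannot extend left of $x=s'$; since $(s,0)\notin\PP$, it cannot cross $x=s$; and to differ from $R_0$ it must extend upward past $y=t$. Monotonicity of column heights together with $h_{s'}>t$ then pins down $R''=[(s',0),(s,h_{s'})]$, as depicted. A fully symmetric argument starting from $(0,t')$ yields $R'=[(0,t'),(r_{t'},t)]$ with $r_{t'}>s$, the obstruction to upward extension now coming from $h_0=t$.

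For $\widetilde R$, starting from the cell $(s',t')\in R_0\setminus R$, we obtain a maximal rectangle $\widetilde R$ which, by the single property, cannot extend left of $x=s'$ or below $y=t'$. The crucial new observation is that $\widetilde R$ must extend both rightward past $x=s$ and upward past $y=t$: otherwise $\widetilde R$ would be contained in $R'$ or in $R''$ respectively, contradicting its maximality. That the resulting region actually lies in $\PP$ comes from the distributive-lattice structure of $V(\PP)$ (Proposition~\ref{prop:parallel=simplelattice}): the points $(s,t')$ and $(s',t)$ are in $V(\PP)$ as corners of cells of $R'$ and $R''$, hence the join $(s,t)\in V(\PP)$; combined with the monotonicity of $h$ and $\ell$, this places $[(s',t'),(r_{t'},h_{s'})]$ entirely inside $\PP$ and makes it maximal.

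The main obstacle is the setup step: precisely identifying $R=[(0,0),(s',t')]$ and deriving $h_{s'}>t$ and $r_{t'}>s$ from the single property via the column/row profile identifications. Once this dictionary is in place, each of $R''$, $R'$, $\widetilde R$ is extracted by the same recipe — start from a distinguished cell of $R_0\setminus R$, combine the single property of $R$ with the maximality of $R_0$ to block all directions but one (or two, for $\widetilde R$), and use the parallelogram monotonicity to fix the remaining extent.
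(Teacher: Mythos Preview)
Your approach is correct and essentially the same as the paper's. The paper argues directly via the turns of the boundary paths $\SS_1,\SS_2$ (e.g., $\SS_2$ goes north at $(s,0)$ by maximality of $R_0$ and turns east at $(s,t')$ by the single property, which pins down $R'$; symmetrically $\SS_1$ gives $R''$), while you encode the same information through the column-height and row-width profiles $h_i,r_j$ and their first jumps past $t$ and $s$; both then obtain $\widetilde R$ from the lattice join of the far corners of $R'$ and $R''$.

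One small expository point: your sentence ``by the single property, [$\widetilde R$] cannot extend left of $x=s'$ or below $y=t'$'' is not literally justified by $R\cap\widetilde R=\emptyset$ alone, since that only rules out extending \emph{simultaneously} left of $s'$ and below $t'$. The actual obstruction is the profile data $h_{s'-1}=t<h_{s'}$ and $r_{t'-1}=s<r_{t'}$, which you do invoke at the end when declaring $[(s',t'),(r_{t'},h_{s'})]$ maximal. This does not affect the correctness of your final construction.
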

\begin{proof}
Let $\PP=(\SS_1,\SS_2)$. Since $s'<s$ and $t'<t$, then all cells of $R_0$ with lower left corner $(a,b)$ with either $s'\leq a$ or $t'\leq b$ belong to some other maximal rectangles of $\PP$ as well. Using the fact that $R_0 \in \MM(\PP)$, we observe that $\SS_2$ takes a north step at $(s,0)$. By using the assumptions $s'<s$ and $t'<t$ and $R$ is the single rectangle of $R_0$, we conclude that $\SS_2$ changes the direction from north to east at  $(s,t')$. Then the coordinates of $R'$ are determined by the next north turn of $\SS_2$. Similar argument on $\SS_1$ shows the existence of $R''$. The existence of $\tilde{R}$ is guaranteed by the fact that $\PP$ is a parallelogram polyomino and hence a distributive lattice, therefore the join of the diagonal corners of $R'$ and $R''$ must belong to $V(\PP)$. 

\end{proof}

In the following, we give a characterization of parallelogram polyominoes that are shortenable in terms of the size of the single rectangle of $R_0$.

\begin{Lemma}\label{lem:shorten}
Let $\PP$ be a parallelogram polyomino and assume $R_0$ has size  $s \times t$. Then $\PP$ is shortenable if and only if the single rectangle $R$ of $R_0$ has size $s'\times t'$ with either $s'=s$ or $t'=t$.
\end{Lemma}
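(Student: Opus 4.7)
My plan is to translate the condition on the single rectangle $R$ into concrete data about the initial segments of the paths $\SS_1,\SS_2$ bounding $\PP$, and then analyze $\PP\setminus R_0$ in each direction.

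First I would establish a dictionary between $(s',t')$ and the paths. Because $R_0$ is the unique maximal rectangle containing $(0,0)$ (Lemma~\ref{lem:uniquerec}), the upper path $\SS_1$ must begin with exactly $N^t$ and the lower path $\SS_2$ with exactly $E^s$: any longer initial run would furnish a second maximal rectangle through $(0,0)$. Next I would show that for a cell $(a,b)\in R_0$, any maximal rectangle through $(a,b)$ other than $R_0$ must cross either the top or the right side of $R_0$, so $(a,b)$ lies only in $R_0$ if and only if $(a,t)\notin\PP$ and $(s,b)\notin\PP$. This identifies the single rectangle with $[(0,0),(s',t')]$, where $s'$ is the least $a\in\{0,\dots,s-1\}$ with $(a,t)\in\PP$ (and $s'=s$ if no such $a$ exists), and symmetrically for $t'$.

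For the $(\Rightarrow)$ direction I argue by contrapositive. Assume $s'<s$ and $t'<t$. The dictionary (or directly Lemma~\ref{lem:RRR}) puts the cells $(s',t)$ and $(s,t')$ in $\PP\setminus R_0$. A direct coordinate check shows that any vertex $(a,b)\le(s,t')$ with $(a,b)\ne(s,t')$ is a vertex only of cells of $R_0$ and is therefore not in $V(\PP\setminus R_0)$; the same argument applies to $(s',t)$. Hence $(s,t')$ and $(s',t)$ are two distinct minimal elements of $V(\PP\setminus R_0)$, and they are incomparable because $s'<s$ and $t'<t$. Since a parallelogram polyomino is a simple planar distributive lattice by Proposition~\ref{prop:parallel=simplelattice} and therefore has a unique minimum, $\PP\setminus R_0$ cannot be a parallelogram polyomino.

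For the $(\Leftarrow)$ direction, take $s'=s$ without loss of generality; the case $t'=t$ is symmetric. The dictionary forces $(a,t)\notin\PP$ for every $0\le a<s$, and column convexity of $\PP$ propagates this upward, so no cell of $\PP\setminus R_0$ lies in columns $0,\dots,s-1$. Setting aside the degenerate case $\PP=R_0$, a symmetric analysis on $\SS_2$ forces $t'<t$, and $\PP\setminus R_0$ sits entirely in columns $\ge s$, with unique minimum vertex $(s,t')$. Its lower boundary is the tail of $\SS_2$ starting at $(s,t')$; its upper boundary is the newly exposed vertical segment at $x=s$ from $(s,t')$ up to the top of column $s$, followed by the remainder of $\SS_1$ from that point. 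Both are NE-paths from $(s,t')$ to the top-right vertex $M$ of $\PP$, meeting only at the endpoints, so $\PP\setminus R_0$ is a parallelogram polyomino.

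The main obstacle, I expect, is the first step: establishing the dictionary between $(s',t')$ and the path structure of $\PP$. This requires carefully ruling out any maximal rectangle through $(0,0)$ other than $R_0$ and characterising which cells of $R_0$ sit in a second maximal rectangle. Once the dictionary is in hand, both directions reduce to reading off minimum vertices of $V(\PP\setminus R_0)$ and assembling its bounding NE-paths.
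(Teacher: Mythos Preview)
Your proposal is correct. The forward direction is essentially the paper's argument: both proceed by contraposition and exhibit incomparable elements in $V(\PP\setminus R_0)$ that obstruct the parallelogram structure. The paper invokes Lemma~\ref{lem:RRR} to locate the rectangle $\widetilde{R}=[(s',t'),(p,q)]$ and then points to the incomparable vertices $(s',q)$ and $(p,t')$, whose meet $(s',t')$ lies in the interior of $R_0$ and hence is absent from $V(\PP\setminus R_0)$; you instead identify $(s',t)$ and $(s,t')$ directly as two incomparable minimal vertices of $V(\PP\setminus R_0)$. These are minor variations on the same idea.

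The reverse direction is where the two genuinely diverge. The paper dispatches it in one line by invoking the lattice interpretation of Proposition~\ref{prop:parallel=simplelattice}: assuming $t'=t$, it simply observes that $\PP\setminus R_0$ coincides with the interval sublattice $L\cap[(s',t),\max L]$, which is automatically a simple planar distributive lattice and hence a parallelogram polyomino. Your argument instead stays on the path side, rebuilding $\PP\setminus R_0$ as the region between an explicit pair of NE-paths (the tail of $\SS_2$ and a spliced version of $\SS_1$). Both work; the paper's route is shorter because the lattice framework absorbs all the bookkeeping, while yours is more self-contained and makes the geometry of the new boundary visible. One small imprecision: your phrase ``top of column $s$'' should be read as $(s,t)$, the point where the newly exposed segment meets $\SS_1$, rather than the top of the cells in column $s$ of $\PP$.
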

\begin{proof}
By contraposition, assume that $R$ has size $s' \times t'$ with $s'<s$ and $t'<t$. From Lemma \ref{lem:RRR} there exist in $\PP$ the maximal rectangles in Figure \ref{fig:rectproof}. We consider the polyomino $\PP_1=\PP\setminus R_0$. We observe that $(s',t),(s,t')\in V(\PP_1)$ with $s'<s$ and $t'<t$, that is a contradiction to the fact that $\PP$ is parallelogram.\\
Conversely, assume that  $V(R)=[(0,0),(s',t)]$ with $s'<s$. Then using Proposition \ref{prop:parallel=simplelattice}, we obtain $\PP\setminus R_0$  is the parallelogram polyomino that corresponds to the sublattice $L\cap [(s',t),\max L]$.
\end{proof}

We now want to link the shortenability to the Gorensteinness. Hibi showed in \cite[page 105]{H} that given a distributive lattice $L$, the Hibi ring $K[L]$ is Gorenstein if and only if the poset $P$ of the join-irreducible elements of $L$ is pure, i.e. all of the maximal chains have the same length. Hence we look at the structure of the poset of the join-irreducible elements of parallelogram polyomino $\PP$ that we identify as a distributive lattice.

Let $H_0, H_1, \ldots, H_n$ be the maximal edge horizontal intervals of $\Pc$ and $V_0, V_1, \ldots, V_n$ be the maximal edge vertical intervals of $\Pc$. Note that $H_0\cap V_0=\{(0,0)\}=\min L$. Set $h_i=\min (H_i)$ for all $1\leq i \leq n$ and $v_j=\min (V_i)$ for all $1\leq j \leq m$ (see Figure \ref{fig:poset}).
Then $h_1\leq h_2\leq \ldots \leq h_n$ and $v_1 \leq v_2 \leq \ldots \leq v_m$ are two maximal chains of $P$.

\begin{figure}[H]
    \centering
    \begin{subfigure}[t]{0.45 \textwidth}
    \centering
    \resizebox{0.65\textwidth}{!}{
  \begin{tikzpicture}
 \draw (0,0)--(2,0);
 \draw (0,1)--(3,1);
 \draw (1,2)--(4,2);
 \draw (1,3)--(4,3); 
 \draw (2,4)--(4,4);

 \draw (0,0)--(0,1);
 \draw (1,0)--(1,3);
 \draw (2,0)--(2,4);
 \draw (3,1)--(3,4);
 \draw (4,2)--(4,4);

\filldraw (0,1) circle (\rad) node [anchor=east]{$h_1$};
\filldraw (1,2) circle (\rad) node [anchor=east]{$h_2$};
\filldraw (1,3) circle (\rad) node [anchor=east]{$h_3$};
\filldraw (2,4) circle (\rad) node [anchor=east]{$h_4$};

\node at (1,0){$\times$};
\node at (1,0)[anchor=north]{$v_1$};

\node at (2,0){$\times$};
\node at (2,0)[anchor=north]{$v_2$};

\node at (3,1){$\times$};
\node at (3,1)[anchor=north]{$v_3$};

\node at (4,2){$\times$};
\node at (4,2)[anchor=north]{$v_4$};
   \end{tikzpicture}}
   \caption{The join-irreducible elements are the minimum of the maximal horizontal and vertical edge intervals}
    \end{subfigure} \hfill%
    \begin{subfigure}[t]{0.45\textwidth}
    \centering 
    \begin{tikzpicture}
    \draw (0,0)--(0,3);
    \draw (1,0)--(1,3);
    
    \draw (0,0)--(1,2);
    \draw (0,1)--(1,3);
    
    \draw (1,0)--(0,1);
    \draw (1,1)--(0,3);

\filldraw (0,0) circle (\rad) node [anchor=east]{$h_1$};
\filldraw (0,1) circle (\rad) node [anchor=east]{$h_2$};
\filldraw (0,2) circle (\rad) node [anchor=east]{$h_3$};
\filldraw (0,3) circle (\rad) node [anchor=east]{$h_4$};

\filldraw (1,0) circle (\rad) node [anchor=west]{$v_1$};
\filldraw (1,1) circle (\rad) node [anchor=west]{$v_2$};
\filldraw (1,2) circle (\rad) node [anchor=west]{$v_3$};
\filldraw (1,3) circle (\rad) node [anchor=west]{$v_4$};
    \end{tikzpicture}
    \caption{The poset $P$ of join-irreducible elements}
    \end{subfigure}
     \caption{}\label{fig:poset}
\end{figure}

In \cite{EHQR}, the authors prove that an $L$-convex polyomino $\PP$ with derived sequence $(\PP_k)_{k=1, \ldots, t}$ for some $t$ is Gorenstein if and only if the bounding box of any $\PP_k$ is a square. For parallelogram polyominoes the latter condition is necessary but not sufficient, as shown in Figure \ref{fig:nongor}. The polyomino $\PP$ in Figure \ref{fig:nongor} is known to be non-Gorenstein from \cite[Theorem 4.2]{RR}, while $\PP$, $\PP_1$ and $\PP_2$ have square bounding boxes.

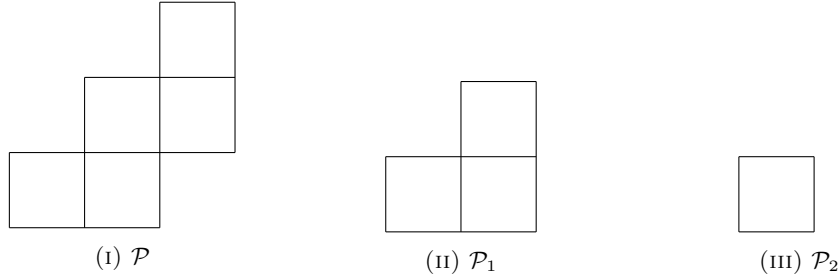
\begin{figure}[H]
\centering
\begin{subfigure}{0.3\textwidth}
\centering
\begin{tikzpicture}
\draw (0,0)--(2,0);
\draw (0,1)--(3,1);
\draw (1,2)--(3,2);
\draw (2,3)--(3,3);

\draw (0,0)--(0,1);
\draw (1,0)--(1,2);
\draw (2,0)--(2,3);
\draw (3,1)--(3,3);

\end{tikzpicture}
\caption{$\PP$}
\end{subfigure}%
\begin{subfigure}{0.3\textwidth}
\centering
\begin{tikzpicture}
\draw (0,0)--(2,0);
\draw (0,1)--(2,1);
\draw (1,2)--(2,2);

\draw (0,0)--(0,1);
\draw (1,0)--(1,2);
\draw (2,0)--(2,2);
\node at (1.5,3){};
\end{tikzpicture}
\caption{$\PP_1$}
\end{subfigure}%
\begin{subfigure}{0.3\textwidth}
\centering
\begin{tikzpicture}
\draw (0,0)--(1,0);
\draw (0,1)--(1,1);

\draw (0,0)--(0,1);
\draw (1,0)--(1,1);
\node at (1.5,3){};
\end{tikzpicture}
\caption{$\PP_2$}
\end{subfigure}%
\caption{An example of non-Gorenstein parallelogram polyomino with square bounding boxes}\label{fig:nongor}
\end{figure}

Next, we prove that a Gorenstein parallelogram polyomino is well-shortenable. 

\begin{Lemma}\label{GtoWS}
Let $\PP$ be a parallelogram polyomino. If $\PP$ is Gorenstein, then $\PP$ is well-shortenable.
\end{Lemma}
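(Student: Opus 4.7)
The plan is to proceed by induction on $\rk \PP$, using Hibi's criterion: by \cite[p.~105]{H} together with Remark~\ref{rem:sameideal}, $K[\PP]$ is Gorenstein if and only if the poset $P$ of join-irreducible elements of $L = V(\PP)$ is pure. The base case is when $\PP$ is a rectangle, in which case $\PP$ is well-shortenable by convention. For the inductive step, assume $\PP$ is Gorenstein and not a rectangle, and that every Gorenstein parallelogram polyomino of strictly smaller rank is well-shortenable.

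First I show that $\PP$ is shortenable, by contradiction. Suppose it is not; then by Lemma~\ref{lem:shorten} the single rectangle $R$ of $R_0$ has size $s' \times t'$ with $s' < s$ and $t' < t$, and by Lemma~\ref{lem:RRR} we obtain the staircase configuration of Figure~\ref{fig:rectproof}, with additional maximal rectangles $R', R''$, and $\widetilde R$. The strategy is to exhibit two maximal chains of $P$ of different lengths. Let $h^{*} = (0, t')$ be the leftmost vertex of the maximal horizontal edge interval at height $t'$; it is a join-irreducible of $L$ because its only cover from below is $(0, t'-1)$. Let $v^{*}$ be the bottommost vertex of the maximal vertical edge interval at the right extremity of $R'$; it too is a join-irreducible, since its only cover from below is the vertex immediately to its left in $R'$. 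The staircase structure around the inner corner $(s', t')$ forces $h^{*} < v^{*}$ to be a covering relation in the Hasse diagram of $P$, with no join-irreducible strictly between them (every other join-irreducible at height $t'$ would be the bottom of a vertical interval starting at $t'$, but by the configuration of Lemma~\ref{lem:RRR} the only such interval is the one at the right end of $R'$). Combining this cross-cover with the monotonicity properties of Remark~\ref{rem:labelling}, one constructs a maximal chain of $P$ whose length is strictly smaller than that of the main horizontal chain $h_1 < \cdots < h_n$, contradicting the purity of $P$.

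Second, I show that $\PP_1 = \PP \setminus R_0$ is either a rectangle or itself Gorenstein, so that the induction hypothesis applies. Assume without loss of generality that $V(R) = [(0,0), (s',t)]$; then by Lemma~\ref{lem:shorten}, the lattice $L_1 = V(\PP_1)$ identifies with $L \cap [(s',t), \max L]$. The goal is to show that the poset $P'$ of join-irreducibles of $L_1$ is pure whenever $P$ is. The key observation is that every maximal chain of $P$ decomposes uniquely as a maximal chain of join-irreducibles lying in the ``removed'' region $\{x \in L : x \not\geq (s',t)\}$ followed by a maximal chain of $P'$, and every maximal chain of $P'$ extends back to a maximal chain of $P$ by prepending some maximal chain from the removed region. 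Since the removed region is essentially governed by $R_0$, whose own Hibi ring is the Gorenstein polynomial ring associated to a rectangle, its portion of maximal chains has a single common length $c$, so the lengths of maximal chains of $P$ and $P'$ differ by the same constant $c$. Hence purity of $P$ forces purity of $P'$, so $\PP_1$ is Gorenstein, and the induction hypothesis gives that $\PP_1$ is well-shortenable; this yields that $\PP$ itself is well-shortenable.

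The main obstacle I anticipate is step one: pinning down with full precision the covering relation $h^{*} < v^{*}$ in the Hasse diagram of $P$ and verifying that it produces a maximal chain strictly shorter than $h_1 < \cdots < h_n$. This hinges on a careful bookkeeping of which vertices around the inner corner $(s',t')$ are join-irreducible, and on combining the staircase configuration of Lemma~\ref{lem:RRR} with the monotonicity of the $EL$-labelling recalled in Remark~\ref{rem:labelling}.
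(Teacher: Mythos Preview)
Your overall architecture matches the paper's: invoke Hibi's criterion, argue by contradiction that $\PP$ is shortenable by exhibiting two maximal chains in $P$ of different lengths, and then show that $\PP_1=\PP\setminus R_0$ is again Gorenstein so that induction applies. However, both steps as written contain genuine gaps.

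\textbf{Step 1.} You flag this yourself, and indeed the argument is not complete. The $EL$-labelling of Remark~\ref{rem:labelling} is a red herring here; what is actually needed is a direct comparison of chain lengths in $P$. The paper does this very concretely: from Lemma~\ref{lem:RRR} one reads off the relation $v_{s'}\le h_{t+1}$ in $P$ (because the vertical rectangle $R''$ forces the $(s')$-th vertical interval to sit below the $(t{+}1)$-st horizontal one), and then the two maximal chains
\[
h_1<\cdots<h_n \quad\text{and}\quad v_1<\cdots<v_{s'}<h_{t+1}<\cdots<h_n
\]
have lengths $n-1$ and $n-t+s'-1<n-1$ (since $s'<t'\le t$ in your notation), contradicting purity. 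No covering relation between your $h^{*}$ and $v^{*}$ is needed, only the order relation.

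\textbf{Step 2.} Here there is an actual error. You assert that the Hibi ring of the rectangle $R_0$ is a ``Gorenstein polynomial ring'', and deduce from this that the portion of any maximal chain of $P$ lying in the removed region has a fixed length $c$. But $K[R_0]$ is the determinantal ring of $2$-minors of an $(s{+}1)\times(t{+}1)$ generic matrix, which is Gorenstein only when $s=t$; in general it is neither a polynomial ring nor Gorenstein, and the two obvious chains $h_1<\cdots<h_t$ and $v_1<\cdots<v_{s'}$ of ``removed'' join-irreducibles have different lengths unless $s'=t$, which you have not established at this point. So the ``common length $c$'' does not follow from your reasoning. The paper instead argues directly: it identifies the join-irreducibles of $\PP_1$ with $\{h_{t+1},\ldots,h_n,v_{s'+1},\ldots,v_m\}$, takes two hypothetical maximal chains of $P_1$ of different lengths, and extends each one back to a maximal chain of $P$ by prepending either $h_1<\cdots<h_t$ or $v_1<\cdots<v_{s'}$ according to whether the chain begins with an $h$ or a $v$; the cross-case is handled using the same relation $v_{s'}\le h_{t+1}$, so that both extensions can be chosen to start with $v_1<\cdots<v_{s'}$. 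This produces two maximal chains of $P$ of different lengths without ever needing the removed portions to have a common length.
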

\begin{proof}
Let $\PP$ be Gorenstein. Then due to \cite[page 105]{H}, the poset $P$ of join-irreducible elements of $\PP$ is pure. Assume that $\PP$ is not shortenable. Then by using Lemma \ref{lem:shorten}, we obtain that if $R_0$ has size $s\times t$ with $t\leq s$ then the single rectangle of $R$ has size $r\times q$ with $V(R)=[(0,0),(r,q)]$  with $q\leq r$, $r<s$ and $q<t$. From Lemma \ref{lem:RRR}, we can find the maximal rectangle $R'$ with $V(R')=[(0,q),(u,t)]$ with $u > s$ as shown in Figure \ref{fig:vrht}. 
\begin{figure}[H]
  \centering
    \resizebox{0.4\textwidth}{!}{
  \begin{tikzpicture}
 \draw (0,0)--(3,0);
 \draw (0,1)--(4,1);
 \draw (0,2)--(4,2);

 \draw (0,0)--(0,2);
 \draw (2,0)--(2,1);
 \draw (3,0)--(3,2);
 \draw (4,1)--(4,2);
\filldraw (0,1) circle (\rad) node [anchor=east]{ {\scriptsize $h_q$}};
\filldraw (0,2) circle (\rad) node [anchor=east]{{\scriptsize   $h_t$}};

 \node at (2,0){$\times$};
  \node at (3,0){$\times$};
   \node at (3.3,1){$\times$};
 \node at (2,0) [anchor=north]{{\scriptsize $v_r$}};
  \node at (3,0) [anchor=north]{{\scriptsize $v_s$}};
    \node at (3.4,1) [anchor=north]{{\scriptsize $v_{s+1}$}};
    \node at (1,0.5){$R$};

    \node at (4.3,1.5){$R'$};
   \end{tikzpicture}}
   \caption{}\label{fig:vrht}
   
   \end{figure}%
We observe that $h_q$ and $v_{s+1}$ correspond to $(0,q)$ and $(s+1,q)$, respectively.   
The latter implies that $h_q \leq v_{s+1}$. We consider the following maximal chains of the poset $P$,
\[
 h_1 \leq h_2 \leq \ldots \leq h_n, \ \ \ h_1\leq h_2\leq \ldots \leq h_q \leq v_{s+1}\leq \ldots \leq v_n.
\]
 The first chain has length $n$ while the second one has length $n-s+q<n$ since $q<t\leq s$. This contradicts the Gorensteinness of $\PP$. Therefore, we conclude that $\PP$ is shortenable and hence, $ \PP_1=\PP\setminus R_0$ is a parallelogram polyomino. 

To show that $\PP$ is well-shortenable, it is enough to show that $\PP_1$ is Gorenstein. Indeed, if $\PP_1$ is Gorenstein then by following the previous argument, it is shortenable and the conclusion follows by applying the same argument. Let $P_1$ be the poset of the join-irreducible elements of $\PP_1$. Assume that the single rectangle $R$ of $R_0$ (in $\PP$) is such that $V(R)=[(0,0),(r,t)]\subset V(R_0)=[(0,0),(s,t)]$. Then $\min (\PP_1)=(r,t)$ and in $P$ we have $v_{r}\leq h_{t+1}$. If $\PP_1$ is not Gorenstein, we exhibit two chains in $\PP$ that have different lengths. Let  
\[
c_1\leq c_2 \leq \ldots \leq c_l, \ \ d_1\leq d_2 \leq \ldots \leq d_h
\]
be two chains with $c_i,d_j \in V(P_1)=\{v_{r+1},\ldots,v_n,h_{t+1},\ldots , h_n\}$ and $l\neq h$. If $c_1=d_1=h_{t+1}$, then $h_1\leq\ldots \leq h_t \leq c_1\leq c_2 \leq \ldots \leq c_l$ and $h_1\leq\ldots \leq h_t\leq d_1\leq d_2 \leq \ldots \leq d_h$ are two maximal chains of $P$ having different lengths, a contradiction to the Gorensteinness of $\PP$. Similar arguments hold for the case $c_1=d_1=v_{r+1}$. We are left with the case $c_1=h_{t+1}$ and $d_1=v_{r+1}$. Since $v_{r}\leq h_{t+1}$, then
\[
v_1\leq \ldots \leq v_{r} \leq c_1 \leq c_2 \leq \ldots \leq c_l, v_1\leq \ldots \leq v_{r} \leq d_1 \leq d_2 \leq \ldots \leq d_h
\]
are two chains of $P$ having lengths $r+l$ and $r+h$ and since $h\neq l$, then they have different lengths and $\PP$ is not Gorenstein, a contradiction. This shows that $\PP\setminus R_0$ is Gorenstein and hence shortenable. 
\end{proof}

In order to link the Gorensteinness with the $S$-property, we prove that a parallelogram polyomino with $S$-property is well shortenable. 
\begin{Lemma}\label{lem:singles}
Let $\PP$ be a parallelogram polyomino with S-property. Then $\PP$ is shortenable.
\end{Lemma}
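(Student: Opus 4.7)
The plan is to argue by contraposition, showing that if $\PP$ is not shortenable then some maximal rectangle of $\PP$ admits no single square and hence $\PP$ fails the $S$-property. First, invoking Lemma \ref{lem:shorten}, non-shortenability forces the single rectangle $R$ of $R_0$ to have size $s'\times t'$ with $s'<s$ and $t'<t$, where $R_0$ has size $s\times t$. Then Lemma \ref{lem:RRR} supplies three further maximal rectangles arranged as in Figure \ref{fig:rectproof}: a horizontal band $R'=[(0,t'),(p,t)]$ with $p>s$, a vertical band $R''=[(s',0),(s,q)]$ with $q>t$, and a rectangle $\widetilde R$ whose upper-right corner lies at $(p,q)$.

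The key step is to show that $R'$ contains no single cell, which forces $R'$ to contain no single square at all and thereby contradicts the $S$-property. I will split the cells of $R'$ according to the $x$-coordinate of their lower-left corner. If $x<s$, the cell is contained in $R_0$, which is distinct from $R'$ since $R_0$ has lower-left $(0,0)$ while $R'$ has lower-left $(0,t')$ with $t'\geq 1$. If $x\geq s$, I want to argue that the cell lies in $\widetilde R$. For this, I will observe that the four corners $(s',t')$, $(p,t')$, $(s',q)$, $(p,q)$ all belong to $V(\PP)$, being corners of $R$, $R'$, $R''$, and the join supplied by Lemma \ref{lem:RRR}. By row and column convexity of the parallelogram polyomino, the full rectangle $[(s',t'),(p,q)]$ must lie inside $\PP$ and is therefore absorbed into $\widetilde R$. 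Since $\widetilde R$ extends up to $y=q>t$ while $R'$ reaches only $y=t$, we have $\widetilde R\neq R'$, and any cell of $R'$ with $x\geq s$ belongs to the distinct maximal rectangle $\widetilde R$.

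Combining both cases, every cell of $R'$ lies in a maximal rectangle other than $R'$, so $R'$ has no single cell and hence no single square, violating the $S$-property at $R'$. This contradiction completes the proof. The main technical obstacle will be the verification in the key step that $\widetilde R$ genuinely covers the portion of $R'$ lying strictly to the right of $R_0$; this hinges on the convexity of parallelogram polyominoes together with the four corner conditions from Lemma \ref{lem:RRR}. By symmetry, the contradiction could equally well be derived from $R''$.
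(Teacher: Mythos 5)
Your argument is correct and follows essentially the same route as the paper: invoke Lemma \ref{lem:RRR} to produce $R'$, $R''$, $\widetilde{R}$, observe that $R'$ is covered by $R_0\cup\widetilde{R}$ and hence has no single square, contradict the $S$-property, and conclude via Lemma \ref{lem:shorten}. The only difference is cosmetic: the paper phrases the contradiction via the single square of $R_0$ and merely ``observes'' the containment $R'\subseteq R_0\cup\widetilde{R}$, whereas you start from non-shortenability of the single rectangle and spell out that containment cell by cell.
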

\begin{proof}
 Let $S$ be the single square of $R_0$. Assume that $R_0$ has size $s \times t$ and $S$ has size $r \times r$ with $r<\min \{s,t\}$. From Lemma \ref{lem:RRR}, there exist some maximal rectangles $R',R''$ and $\widetilde{R}$ as in Figure \ref{fig:rectproof}. We observe that in this case $R',R''$ are contained in $R_0\cup \widetilde{R}$, that is they do not have single squares, and it is a contradiction to the fact that $\PP$ has the $S$-property. Therefore, either $r=s$ or $r=t$ and the conclusion follows from Lemma~\ref{lem:shorten}.
\end{proof}

\begin{Corollary}\label{cor:StoWS}
Let $\PP \subseteq [m,n]$ be a parallelogram polyomino with S-property, let $R_0,\ldots, R_l$  be the maximal rectangles of $\PP$ having single squares $S_0,S_1,\ldots S_l$ of sizes $t_1 \times t_1,\ldots, t_l \times t_l$, respectively. For any $i \in {1,\ldots, l}$ let $c_i=\sum_{j=1}^i t_j$. Then, we have  $V(S_{i})\cap V(S_{i+1})=(c_i,c_i)$ and $m=n=c_l$. Moreover $\PP$ is well-shortenable.
\end{Corollary}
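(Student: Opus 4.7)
The plan is to proceed by induction on $l$, the number of maximal rectangles of $\PP$ other than the initial one $R_0$. For the base case $l=0$, the polyomino $\PP$ coincides with its only maximal rectangle $R_0$, whose unique single square (by the $S$-property) is $R_0$ itself, so $\PP$ is a square of side $t_0$, all asserted equalities are immediate, and $\PP$ is trivially well-shortenable.

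For the inductive step, Lemma~\ref{lem:singles} gives that $\PP$ is shortenable. By Lemma~\ref{lem:uniquerec} the cell at $(0,0)$ belongs only to $R_0$, so it lies in the single square $S_0$, giving $(0,0)\in V(S_0)$. Combining this with the $S$-property (so $S_0$ is $t_0\times t_0$) and Lemma~\ref{lem:shorten}, I obtain $S_0=[(0,0),(t_0,t_0)]$, and $R_0$ has the form $[(0,0),(t_0,t)]$ or $[(0,0),(s,t_0)]$ with the appropriate dimension at least $t_0$. As described in the proof of Lemma~\ref{lem:shorten}, $\PP_1:=\PP\setminus R_0$ is then the parallelogram polyomino corresponding to the sublattice $L\cap[(t_0,t_0),\max L]$, whose lower left corner is $(t_0,t_0)$.

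The key step, and the main obstacle, will be to verify that $\PP_1$ inherits the $S$-property, and that its maximal rectangles together with their single squares are precisely $R_i\cap \PP_1$ and $S_i$ for $i=1,\ldots,l$. I would argue as follows: every rectangle contained in $\PP_1$ is a rectangle in $\PP$ and is therefore contained in some $R_i$; each $R_i\cap \PP_1$ is itself a rectangle (the intersection of two rectangles in $\NN^2$), and it is maximal in $\PP_1$ because any proper extension inside $\PP_1\subset\PP$ would yield a rectangle of $\PP$ strictly containing $R_i$, contradicting the maximality of $R_i$ in $\PP$; finally, for $i\geq 1$, the $S$-property of $\PP$ implies $S_i\cap R_0=\varnothing$, so $S_i\subset\PP_1$, and the conditions $S_i\cap R_j=\varnothing$ for all $j\neq i$ persist in $\PP_1$, making $S_i$ the single square of $R_i\cap\PP_1$.

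Once this is set up, applying the inductive hypothesis to $\PP_1$ and translating back by the shift $(t_0,t_0)$ produces the stated positions $V(S_i)\cap V(S_{i+1})=(c_i,c_i)$ and the equality $m=n=c_l$; the intersection $V(S_0)\cap V(S_1)=\{(t_0,t_0)\}$ is immediate from $S_1$ having lower left corner $(t_0,t_0)$, and well-shortenability of $\PP$ follows at once from well-shortenability of $\PP_1$, since $\PP_1=\PP\setminus R_0$ is either a rectangle (if $l=1$) or is itself well-shortenable by the inductive hypothesis.
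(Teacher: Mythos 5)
Your overall strategy is the same as the paper's: peel off $R_0$, apply Lemma~\ref{lem:singles} to locate $S_0=[(0,0),(t_0,t_0)]$, and recurse along the derived sequence. The paper's own proof simply applies Lemma~\ref{lem:singles} to $\PP_1,\PP_2,\dots$ without ever justifying that each $\PP_i$ again has the $S$-property with the expected maximal rectangles and single squares; you correctly isolate this as the crux. The problem is that the two justifications you give for it do not hold up as written. First, $R_i\cap\PP_1$ is \emph{not} ``the intersection of two rectangles in $\NN^2$'' --- $\PP_1$ is not a rectangle, and $R_i\cap\PP_1=R_i\setminus R_0$ is a set difference of rectangles, which is a priori L-shaped whenever $R_i$ straddles the upper-right corner $(s,t)$ of $R_0$ (i.e.\ $R_i$ meets $R_0$ and extends both to the right of column $s$ and above row $t$). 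If some $R_i\cap\PP_1$ were L-shaped, your identification of the maximal rectangles of $\PP_1$ would collapse. This configuration \emph{is} excluded, but only because of something you already stated and did not use: since $\PP_1=L\cap[(t_0,t_0),\max L]$ (Lemma~\ref{lem:shorten}), no cell of $\PP\setminus R_0$ has a coordinate of its lower left corner below $t_0$, whereas a straddling $R_i$ would force a cell of $\PP_1$ in column $s$ at height below $t$. You need to say this.

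Second, your maximality argument is incorrect: a rectangle $R'$ of $\PP_1$ properly containing $R_i\cap\PP_1$ does \emph{not} produce a rectangle of $\PP$ strictly containing $R_i$ (it misses $R_i\cap R_0$, and $R'\cup R_i$ need not be a rectangle --- overlapping maximal rectangles of a parallelogram polyomino are in general not contained in a common rectangle, cf.\ Figure~\ref{fig:shornotwell}). The correct argument goes through the single square: such an $R'$ is a rectangle of $\PP$, hence $R'\subseteq R_j$ for some $j$; since $S_i\subseteq R_i\cap\PP_1\subseteq R'\subseteq R_j$ and $S_i$ is single, $j=i$, so $R'\subseteq R_i\cap\PP_1$, a contradiction. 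With these two repairs (both using ingredients already in your setup), the rest of your argument --- that singleness of $S_i$ is preserved because a cell of $R_i\cap\PP_1$ lies in $R_j\cap\PP_1$ exactly when it lies in $R_j$, the application of the inductive hypothesis, and the translation by $(t_0,t_0)$ --- goes through and in fact supplies details the published proof omits.
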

\begin{proof}
From Lemma \ref{lem:singles} we have that $V(S_0)=[(0,0),(t_1,t_1)]$ and $\PP$ is shortenable. Let $\PP_1=\PP\setminus R_0$. From Lemma \ref{lem:singles} applied to $\PP_1$, we obtain that $S_1$ is such that $V(S_1)=[(t_1,t_1),(c_2,c_2)]$. We recursively consider the polyomino $\PP_i$ obtained from from $\PP_{i-1}$ by removing the rectangle $R_i$ and we obtain from Lemma \ref{lem:singles}.that $V(S_{i+1})=[(c_i,c_i),(c_{i+1},c_{i+1})]$. The polyomino $\PP_l$ is a square, that is $c_l=m=n$ and $\PP$ is well-shortenable.
\end{proof}

Now we prove the main theorem of this section.
\begin{Theorem}\label{thm:GorSprop}
Let $\PP$ be a parallelogram polyomino. The following are equivalent:
\begin{itemize}
\item[(i)] $\PP$ is Gorenstein;
\item[(ii)] $\PP$ has the $S$-property.
\end{itemize}
\end{Theorem}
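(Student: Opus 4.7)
The strategy is to translate both conditions into properties of the poset $P$ of join-irreducible elements of the distributive lattice structure on $\PP$ (via Proposition \ref{prop:parallel=simplelattice} and Remark \ref{rem:sameideal}) and to invoke Hibi's classical criterion \cite{H}: the Hibi ring $K[\PP]$ is Gorenstein if and only if $P$ is pure, i.e., all its maximal chains have the same length. The task therefore reduces to showing that the $S$-property is equivalent to purity of $P$.

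For the direction (ii)$\Rightarrow$(i), I would start from Corollary \ref{cor:StoWS}, which gives the explicit diagonal decomposition of $\PP$ in presence of the $S$-property: the maximal rectangles $R_0,\ldots,R_l$ carry unique single squares $S_0,\ldots,S_l$ aligned along the main diagonal with $V(S_i)\cap V(S_{i+1})=(c_i,c_i)$ and $m=n=c_l$. Using this, I would identify the join-irreducibles $h_1,\ldots,h_n$ and $v_1,\ldots,v_n$ with the mins of the maximal horizontal/vertical edge intervals in each diagonal block, and check that each block contributes the same number of $h$-type and $v$-type elements (since the relevant portion of $\PP$ inside each block is again, up to translation, a shape whose single rectangle is a square by Lemma \ref{lem:shorten}). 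Every maximal chain of $P$ must traverse all $l+1$ blocks and use exactly the same number of covers per block; a direct count then shows every maximal chain has length $c_l-1=n-1$, so $P$ is pure and $\PP$ is Gorenstein.

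For the direction (i)$\Rightarrow$(ii), I would proceed by strong induction on the rank of $\PP$. By Lemma \ref{GtoWS}, $\PP$ Gorenstein is well-shortenable, and inspecting that proof reveals that $\PP_1:=\PP\setminus R_0$ is again Gorenstein. The base case, when $\PP$ is a single rectangle, is immediate: purity of $P$ forces $m=n$, so $\PP$ is a square and is its own single square. For the inductive step, apply the hypothesis to $\PP_1$ to obtain the $S$-property for $\PP_1$, and then transfer it back to $\PP$ in two steps: (a) show that $R_0$ has a unique single square in $\PP$, and (b) show that the remaining maximal rectangles of $\PP$, which are exactly the maximal rectangles of $\PP_1$ possibly extended across the shared strip $R_0\setminus S_0$, retain their unique single squares from $\PP_1$. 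Part (b) is a routine check, because the shared strip $R_0\setminus S_0$ overlaps only with $R_0$ among the maximal rectangles of $\PP$, so extending a maximal rectangle of $\PP_1$ across it introduces no new shared cells beyond those with $R_0$, and the single square inherited from $\PP_1$ remains single in $\PP$.

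The main obstacle is part (a): Lemma \ref{lem:shorten} only guarantees that the single rectangle of $R_0$ has $s'=s$ or $t'=t$, whereas the $S$-property (as made explicit in Corollary \ref{cor:StoWS}) requires it to be a square, i.e.\ $s'=t'$. To upgrade this I plan to exhibit two maximal chains of $P$ passing through the single rectangle of $R_0$ in two different ways (one ascending through the column $x=0$ and continuing via the uppermost chain past the top-right corner of the single rectangle, the other traversing the row $y=0$ and continuing via the lowermost chain past its bottom-right corner) and to compare their lengths in the spirit of the chain argument used in the proof of Lemma \ref{GtoWS}. Unless $s'=t'$, the two chains would differ in length, contradicting purity of $P$, and this forces the single rectangle of $R_0$ to be a square.
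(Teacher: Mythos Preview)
Your overall strategy---reducing both conditions to purity of the poset $P$ of join-irreducibles via Hibi's criterion, and exploiting well-shortenability from Lemma~\ref{GtoWS}---is exactly the paper's. Your (i)$\Rightarrow$(ii) is recast as an induction along the derived sequence rather than repeated direct chain comparisons, but the substance is the same.

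There is, however, a false intermediate claim in your part (b): the strip $R_0\setminus S_0$ does \emph{not} overlap only with $R_0$ among $\MM(\PP)$. The very next maximal rectangle $R_1$ (the one meeting $R_0$ in $R_0\setminus S_0$) lies partly inside this strip, and further rectangles may as well. The correct justification is that the map $R\mapsto R\cap\PP_1$ is a bijection $\MM(\PP)\setminus\{R_0\}\to\MM(\PP_1)$, and a cell of $\PP_1$ lies in $R\in\MM(\PP)$ if and only if it lies in $R\cap\PP_1\in\MM(\PP_1)$; hence the single rectangle of $R$ in $\PP$ equals the single rectangle of $R\cap\PP_1$ in $\PP_1$, which by hypothesis is a square. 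Your conclusion survives, but the stated reason does not.

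Your (ii)$\Rightarrow$(i) sketch is too loose to stand as written. Maximal chains of $P$ live in the poset $\{h_1,\ldots,h_n,v_1,\ldots,v_n\}$, not in the polyomino, so ``traversing all $l+1$ diagonal blocks'' has no direct meaning there; you would need to translate the block structure into statements about the cover relations of $P$. The paper does precisely this: it shows that every cross-cover $v_s\to h_{t+1}$ or $h_c\to v_{d+1}$ in $P$ satisfies $s=t$ (resp.\ $c=d$), using Corollary~\ref{cor:StoWS} to locate them at the levels $c_k$ and then ruling out any off-diagonal cross-cover by exhibiting a maximal rectangle without single square. Once all cross-covers are diagonal, purity is immediate. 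To make your block-counting rigorous you would end up proving exactly this statement.
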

\begin{proof}
(i)$\Rightarrow$(ii). From Lemma \ref{GtoWS}, we have that $\PP$ is well shortenable. Moreover, from the proof of Lemma \ref{GtoWS} it arises that all of the polyominoes in the derived sequence are Gorenstein, that is they have square bounding boxes due to the pureness of the poset. In particular, $\PP \subset  [(0,0),(n,n)]$.

First, we show that the single rectangle $R$ of $R_0$ is a square. Let $P$ be the poset of the join-irreducible elements of $\PP$. Assume that
\[
V(R)=[(0,0),(s,t)]\subset V(R_0)=[(0,0),(q,t)]
\]
with $s \neq t$. Hence $\min V(\PP_1)=(s,t)$ and in $P$ we have $v_{s}\leq h_{t+1}$. This gives that the two chains
\[
v_1 \leq \ldots \leq v_s\leq v_{s+1} \leq \ldots \leq v_n, v_1 \leq \ldots \leq v_s\leq h_{t+1} \leq \ldots \leq h_n
\]
have different lengths and this is a contradiction to the assumption that $P$ is Gorenstein. That is $s=t$. 
Furthermore, we claim that there exists a unique maximal rectangle $R_1$ containing $\widetilde{R}= R_0\setminus R$, namely $[(s,0),(q,s)]$. Let $R_1'=[(a,b),(c,d)]$ be a maximal rectangle such that $R_1' \cap \widetilde{R}\neq \varnothing$, that is $s \leq a \leq q$ and $b<s$. From the property of parallelogram polyominoes, we also obtain that $b\geq 0$. If $b>0$, then the rectangle $[(a,0),(c,d)]$ is a rectangle containing $R_1'$, contradicting its maximality. That is, we have $b=0$. We observe that $c\leq q$, otherwise the rectangle $[(0,b),(c,s)]$ is a maximal rectangle having non-empty intersection with $R$, contradiction. Moreover $d>s$, otherwise $R_1'\subseteq \widetilde{R}$.
The latter implies that all of the maximal rectangle having non-empty intersection with $\widetilde{R}$ have lower left corner on the edge interval $[(s,0),(q,0)]$. Then, there exists a unique maximal rectangle $R_1$ with vertices $[(s,0),(q,u)]$ where $u$ is the minimum of the heights of such rectangles.
 We now show that $R_1$ has a single rectangle. If this is not the case, then there exists a maximal rectangle $R_2$ such that $R_1 \subseteq R_0 \cup R_2$ and $V(R_2)=[(s,s),(a,b)]$ with $a > q$, hence $h_s \leq v_{q+1}$ in $P$. This implies that 
\[
h_1 \leq \ldots \leq h_s\leq v_{q+1} \leq \ldots \leq v_n, h_1 \leq \ldots \leq h_n
\]
are two chains having lengths $n-q+s$ and $n$, respectively. Since $s<q$, then $n-q+s<n$, contradicting the Gorensteinnes of $\PP$. In particular this implies that any maximal rectangle has a single rectangle. By using a similar technique on any polyomino of the derived sequence we obtain that all of the rectangles of $\PP$ have a single square.

(ii)$\Rightarrow$(i). We assume that $\PP$ has the $S$-property. To have the Gorensteinness, we have to prove that for any edge in the Hasse diagram of the poset $\PP$ of the form $v_s \to h_{t+1}$ (or $h_c \to v_{d+1}$), we have $s=t$ (or $c=d$). We follow the notation of Corollary \ref{cor:StoWS}. From the latter result we obtain that if $S_0$ has size $t_0 \times t_0$ and $S_1$ has size $t_1 \times t_1$. Then $R_0$ has either size $t_0 \times (t_0 +t_1)$ or $(t_0 +t_1) \times t_0$, that is either $h_{t_0}\to v_{t_0 +1}$ or $v_{t_0}\to h_{t_0 +1}$.  
Since $\PP$ is well-shortenable, we inductively apply the same argument to find that for any $k \in \{1,\ldots l\}$ either $h_{c_k}\to v_{c_k +1}$ or $v_{c_k}\to h_{c_k +1}$. \\ Moreover, assume that  $h_r \to v_s$ is an edge of the poset $P$ such that $c_{k-1}+1 \leq r < c_{k}$ for some $k$. It follows that $s>c_{k}+1$ and there exists a maximal rectangle in $\MM(\PP)$ of size $a \times b $ with $b = c_{k}-r$  that has non-empty intersection with $S_k$. This leads to a contradiction to the fact that $S_k$ is single.
\end{proof}

 Now, we give a description of Gorenstein parallelogram polyominoes in terms of the 2-colored Motzkin paths. To do this, we first recall the well-known bijection between the parallelogram polyominoes and 2-colored Motzkin paths, see \cite{DV}.  Let $(a,b) \in \NN$. Then 

\begin{enumerate}
    \item the edge $\{(a,b), (a+1,b+1)\}$ is called a  {\em rise} step, 
    \item  the edge $\{(a,b), (a+1,b-1)\}$ is called a {\em fall} step,
     \item  the edge $\{(a,b), (a+1,b)\}$ is called a {\em east} step or a {\em horizontal} step.
\end{enumerate}
A 2-colored Motzkin path 
\[
\mathcal{M}: (0,0)=(a_0,b_0), (a_1, b_1), \ldots, (a_n,b_n)=(n,0)
\] 
in $\NN \times \NN$ is a path that never passes below the $x$-axis and consists of rise steps, fall steps and two types of horizontal steps that are called $\alpha$-colored horizontal steps and $\beta$-colored horizontal steps.
Let $\Pc$ be a parallelogram polyomino determined by $(\mathcal{S}_1,\mathcal{S}_2)$ such that $\mathcal{S}_1$ and $\mathcal{S}_2$ intersect at $(0,0)$ and $(m,n)$. Then $\Pc$ can be encoded in a unique 2-colored Motzkin path $\MM_\PP$ as described in the following algorithm given in \cite{DNPR}.

Each north-east path in $\NN \times \NN$ of length $n$ can be identified as a binary sequence with 0 representing an east step and 1 representing a north step.   Let $\Pc=(\SS_1, \SS_2)$ be a parallelogram polyomino and let $u(\Pc)$ be the binary tuple representing $\SS_1$ and $\ell(\Pc)$ be the binary tuple representing $\SS_2$. Create a matrix $M$ with $u(\Pc)$ as its first row and $\ell(\Pc)$ as its second row.  Then $M$ can be encoded as a Motzkin path by the coding:

\begin{equation}
        \begin{aligned}[a]
        &\binom{1}{0} \mapsto  \text{rise step}  &  &\binom{0}{1} \mapsto  \text{fall step}\\
        &\binom{1}{1} \mapsto  \text{$\alpha$-colored horizontal step}  &  &\binom{0}{0} \mapsto  \text{$\beta$-colored horizontal  step} 
        \end{aligned}
\label{eq:coding}
\end{equation}

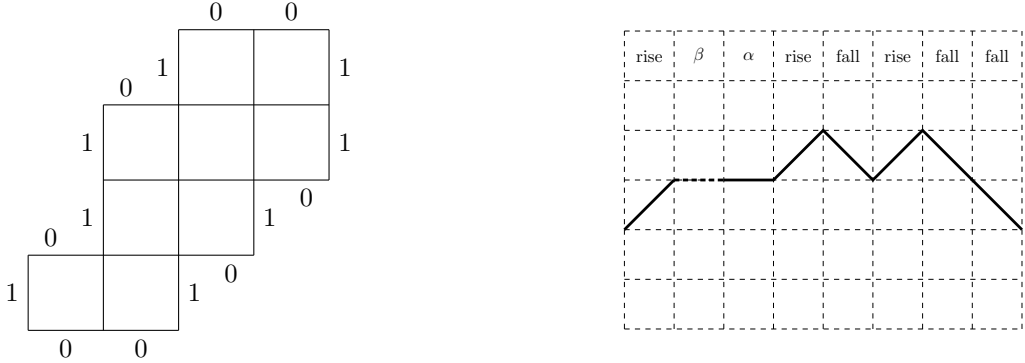
\begin{figure}[H]
    \centering
    \begin{subfigure}{0.45\textwidth}
      \begin{tikzpicture}
 \draw (0,0)--(2,0);
 \draw (0,1)--(3,1);
 \draw (1,2)--(4,2);
 \draw (1,3)--(4,3); 
 \draw (2,4)--(4,4);

 \draw (0,0)--(0,1);
 \draw (1,0)--(1,3);
 \draw (2,0)--(2,4);
 \draw (3,1)--(3,4);
 \draw (4,2)--(4,4);
 
 \node at (0.5,0) [anchor=north]{$0$};
 \node at (1.5,0) [anchor=north]{$0$};
 \node at (2.7,1) [anchor=north]{$0$};
 \node at (3.7,2) [anchor=north]{$0$};
 
 \node at (2,0.5) [anchor=west]{$1$};
 \node at (3,1.5) [anchor=west]{$1$};
 \node at (4,2.5) [anchor=west]{$1$};
 \node at (4,3.5) [anchor=west]{$1$};
 
 \node at (0,0.5) [anchor=east]{$1$};
 \node at (1,1.5) [anchor=east]{$1$};
 \node at (1,2.5) [anchor=east]{$1$};
 \node at (2,3.5) [anchor=east]{$1$};
 
 \node at (0.3,1) [anchor=south]{$0$};
 \node at (1.3,3) [anchor=south]{$0$};
 \node at (2.5,4) [anchor=south]{$0$};
 \node at (3.5,4) [anchor=south]{$0$};
   \end{tikzpicture}
    \end{subfigure}\hfill %
    \begin{subfigure}{0.45\textwidth}
     \resizebox{0.8\textwidth}{!}{
    \begin{tikzpicture}
    \draw[dashed] (0,0)--(8,0);
    \draw[dashed] (0,1)--(8,1);
    \draw[dashed] (0,2)--(8,2);
    \draw[dashed] (0,3)--(8,3);
    \draw[dashed] (0,4)--(8,4);
    \draw[dashed] (0,5)--(8,5);
    \draw[dashed] (0,6)--(8,6);
    
    \draw[dashed] (0,0)--(0,6);
    \draw[dashed] (1,0)--(1,6);
    \draw[dashed] (2,0)--(2,6);
    \draw[dashed] (3,0)--(3,6);
    \draw[dashed] (4,0)--(4,6);
    \draw[dashed] (5,0)--(5,6);
    \draw[dashed] (6,0)--(6,6);
    \draw[dashed] (7,0)--(7,6);
    \draw[dashed] (8,0)--(8,6);
    
    \draw[ultra thick] (0,2)--(1,3);
    \draw[ultra thick, densely dashed] (1,3)--(2,3);
    \draw[ultra thick] (2,3)--(3,3)--(4,4)--(5,3)--(6,4)--(8,2);
    
    \node at (0.5,5.5){rise};
    \node at (1.5,5.5){$\beta$};
    \node at (2.5,5.5){$\alpha$};
    \node at (3.5,5.5){rise};
    \node at (4.5,5.5){fall};
    \node at (5.5,5.5){rise};
    \node at (6.5,5.5){fall};
    \node at (7.5,5.5){fall};
    \end{tikzpicture}}
    \end{subfigure}
    \caption{A parallelogram polyomino with its Motzkin path}
    \label{fig:motzexa}
\end{figure}

For example, Figure~\ref{fig:motzexa} shows a  parallelogram polyomino and the associated 2-colored Motzkin path. The $\beta$-colored horizontal steps are shown as dashed lines and the $\alpha$-colored steps are shown as normal lines.  
We observe that
\[
u(\PP)= 1 0 1 1 0 1 0 0  \ \ \ \
\ell(\PP)= 0 0 1 0 1 0 1 1 
\]
The associated matrix $M$ of $\Pc$ described above is :
\[
\begin{pmatrix}
1 &0 &1 &1 &0 &1 &0 &0\\
0 &0 &1 &0 &1 &0 &1 &1 \\
\end{pmatrix}
\]

We need the following terminologies to describe Gorenstein parallelogram polyominoes in terms of their associated 2-colored Motzkin paths. 

\begin{Definition}
\begin{enumerate}
\item Let $\SS:s_1,\ldots, s_l$ be a north-east path. A sequence of consecutive north steps (resp. east steps) $s_i,\ldots, s_{i+k}$ makes a \emph{maximal block} of length $k$ in $\SS$ if either $i=1$ or $s_{i-1}$ is an east step (resp. north step), and either $i+k=l$ or $s_{i+k+1}$ is a north step (resp. east step). Note that in $\SS$, a maximal block of length $k$ of consecutive north steps (resp. east steps) corresponds to a maximal block of $k$ 1s (resp. 0s) in its binary representation.

\item Let $\PP=(\SS_1,\SS_2)$ be a parallelogram polyomino. A sequence of consecutive elements $s_1, s_2,\ldots, s_{l}$ of $\SS_1$ (resp. $\SS_2$) is called a \emph{maximal NE-block} if there exists $i \in \{1,\ldots l\}$ such that $s_1 \ldots s_i$ is a maximal block of north steps (resp. east steps) and $s_{i+1} \ldots s_l$ is a maximal block of east steps (resp. north steps).
\end{enumerate}
\end{Definition}

For example, for the parallelogram polyomino given in Figure \ref{fig:motzexa}, the binary representation of $\SS_1$ is $u(\PP): 10110100$.  The maximal NE-block in $\SS_1$ are $10$, $110$ and $100$. The NE-blocks in $\SS_1$ determine the corners in $\SS_1$. Similarly, the maximal NE-block in $\SS_2$ are $001$, $01$, and  $011$ and the NE-blocks in $\SS_2$ determine the corners in $\SS_2$. We emphasize that in $\SS_1$ each NE-block starts with a north step, while an NE-block in $\SS_2$ starts with an east step.


\begin{Theorem}\label{thm:motpath}
Let $\PP=(\SS_1,\SS_2)$ be a parallelogram polyomino. $\PP$ has the $S$-property if and only if the following conditions hold:
\begin{enumerate}
    \item in $\SS_1$, each maximal block of length $k$ of consecutive north steps is followed by a maximal block of length $k$ of consecutive east steps. 
    \item in $\SS_2$, each maximal block of length $k$ of consecutive east steps is followed by a maximal block of length $k$ of consecutive north steps.
\end{enumerate}
\end{Theorem}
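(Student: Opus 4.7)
My plan is to reformulate both the $S$-property and conditions (1)--(2) as statements about where $\SS_1$ and $\SS_2$ meet the main diagonal $\{y=x\}$, and then identify the two reformulations. The first step is a geometric translation of (1) and (2). Writing $\SS_1=N^{k_1}E^{l_1}N^{k_2}E^{l_2}\cdots N^{k_p}E^{l_p}$, the vertex of $\SS_1$ reached at the end of the $r$-th pair $N^{k_r}E^{l_r}$ is $(\sum_{i\le r} l_i,\sum_{i\le r} k_i)$. A telescoping argument shows that condition (1) is equivalent to $\sum_{i\le r}l_i=\sum_{i\le r}k_i$ for every $r$, that is, every \emph{EN-corner} of $\SS_1$ (a vertex at which an east step is followed by a north step) lies on the diagonal. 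Symmetrically, condition (2) is equivalent to every NE-corner of $\SS_2$ lying on the diagonal. The theorem thus reduces to the equivalence: $\PP$ has the $S$-property if and only if every EN-corner of $\SS_1$ and every NE-corner of $\SS_2$ is of the form $(c,c)$.

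For the direction ($\Rightarrow$), I invoke Corollary~\ref{cor:StoWS}: the single squares sit along the diagonal with corners $(c_i,c_i)$, and the proof of Theorem~\ref{thm:GorSprop}~$\mathrm{(i)\Rightarrow(ii)}$ supplies an extension direction $d_i\in\{N,E\}$ for each rectangle of the derived sequence. I then argue by contradiction: suppose $(a,b)$ is an EN-corner of $\SS_1$ with $a\ne b$. Unwinding the local geometry of the upper path, the cells $(a-1,b-1)$, $(a,b-1)$, $(a,b)$ belong to $\PP$ while $(a-1,b)$ does not. This forces a maximal rectangle $R$ of $\PP$ containing $(a-1,b-1)$ whose top row is $y=b-1$ and another maximal rectangle $R'$ containing $(a,b)$ whose leftmost column is $x=a$. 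Analyzing the singles of $R$ and $R'$ using Corollary~\ref{cor:StoWS} (all singles are squares on the diagonal), the off-diagonal position of $(a,b)$ forces the single of one of $R,R'$ to be a non-square rectangle or to be absent, contradicting the $S$-property. Hence every EN-corner of $\SS_1$ lies on the diagonal, and the symmetric argument applies to $\SS_2$.

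For the direction ($\Leftarrow$), the reformulation provides diagonal points $0=c_0<c_1<\cdots<c_{l+1}=n$ obtained by merging in increasing order the interior EN-corners of $\SS_1$ with the interior NE-corners of $\SS_2$; these two sets are disjoint because, by definition of a parallelogram polyomino, $\SS_1$ and $\SS_2$ meet only at $(0,0)$ and $(n,n)$. Between $(c_i,c_i)$ and $(c_{i+1},c_{i+1})$ each of $\SS_1$ and $\SS_2$ performs exactly $c_{i+1}-c_i$ north and east steps, and an elementary analysis shows that the diagonal square $T_i=[(c_i,c_i),(c_{i+1},c_{i+1})]$ lies entirely inside $\PP$. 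I then verify that each $T_i$ is the unique single of a maximal rectangle of $\PP$: depending on which of the two paths touches the adjacent diagonal corner, the corresponding maximal rectangle extends either upward or rightward from $T_i$ toward a neighboring diagonal square, and in either case no other maximal rectangle contains any cell of $T_i$. This yields the $S$-property.

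The principal obstacle, common to both directions, is bookkeeping the interaction between the two paths near each diagonal corner. In the forward direction it appears in the contradiction argument: one must trace the maximal rectangles adjacent to the hypothetical off-diagonal EN-corner and show their singles cannot both be the required diagonal squares. In the reverse direction it appears in identifying each maximal rectangle of $\PP$ with exactly one diagonal square $T_i$, which requires a careful case analysis on whether $(c_i,c_i)$ lies on $\SS_1$ or on $\SS_2$. In both cases, the non-intersection of $\SS_1$ and $\SS_2$ in the interior of $\PP$ is the essential geometric input that keeps the case analysis clean.
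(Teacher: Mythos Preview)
Your diagonal reformulation of conditions (1) and (2) is correct and elegant: writing $\SS_1=N^{k_1}E^{l_1}\cdots N^{k_p}E^{l_p}$, the $r$-th EN-corner sits at $(\sum_{i\le r}l_i,\sum_{i\le r}k_i)$, and a telescoping shows that all these lie on $y=x$ exactly when $k_r=l_r$ for every $r$. The analogous statement for $\SS_2$ is equally clean. This is a genuinely different organizing principle from the paper's proof, which never mentions the diagonal and instead runs a straight induction on the derived sequence, peeling off $R_0$ and reading the first $N^tE^t$-block of $\SS_1$ directly from the single square of $R_0$.

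However, both directions of your argument have real gaps that are not just bookkeeping. In the forward direction, the contradiction step is asserted but not carried out. You produce maximal rectangles $R$ (with top row $y=b-1$) and $R'$ (with left column $x=a$), and you know from Corollary~\ref{cor:StoWS} that their singles are diagonal squares. But nothing you have written forces an inconsistency: the constraints ``top of $S_R$ at most $b$'' and ``left of $S_{R'}$ at least $a$'' are perfectly compatible with $a\neq b$. To get a contradiction you would need the finer structural information that the maximal rectangles of $\PP$ are exactly the $R_0,\ldots,R_l$ of the derived sequence, each of shape $t_i\times(t_i+t_{i+1})$ or its transpose, anchored at $(c_{i-1},c_{i-1})$---and extracting this essentially reproduces the paper's induction. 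A direct route is much shorter: once you have Corollary~\ref{cor:StoWS}, the single square $S_0$ has size $t_1\times t_1$, the uniqueness of $R_0$ forces $\SS_1$ to begin $N^{t_1}E^{t_1}$ and then coincide with the upper path of $\PP_1=\PP\setminus R_0$; now induct.

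In the reverse direction the gap is more concrete. You build diagonal squares $T_i$ and argue each is single in \emph{some} maximal rectangle, but the $S$-property requires that \emph{every} maximal rectangle have a single square. You therefore need a bijection between the maximal rectangles of $\PP$ and the $T_i$. This is obtainable: under (1)--(2), every maximal rectangle has the form $[(K_{r-1},A_{s-1}),(A_s,K_r)]$ for an NE-corner $(K_{r-1},K_r)$ of $\SS_1$ and an EN-corner $(A_s,A_{s-1})$ of $\SS_2$, and maximality forces $[K_{r-1},K_r]$ and $[A_{s-1},A_s]$ to overlap in an interval containing no other $K_\bullet$ or $A_\bullet$, hence in a single $[c_i,c_{i+1}]$. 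But this characterization of maximal rectangles is itself a lemma you have not stated or proved, and without it your argument does not establish the $S$-property. The paper sidesteps this entirely by inducting on the number of maximal NE-blocks: it shows $R_0$ has a single square and that $\PP'=\PP\setminus R_0$ again satisfies (1)--(2) with one fewer block, so the $S$-property for $\PP'$ plus the single square of $R_0$ give it for $\PP$.
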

\begin{proof}
Assume that $\PP$ has the $S$-property. We need to show that $\PP$ satisfies the conditions (1) and (2). We proceed by induction on the total number $l$ of maximal rectangles of $\PP$.\\
If $l=1$, then $\PP$ itself is a rectangle. Using the assumption that $\PP$ has the $S$-property, we see that $\PP$ is in fact a square of size $t\times t$. This shows that binary representations of $\SS_1$ and $\SS_2$ are given by

\[
u(\PP): \ \underbrace{11\ldots1}_{t \mbox{ \scriptsize times}} \underbrace{00\ldots0}_{t \mbox{ \scriptsize times}}, \ \ \ \ell (\PP) : \  \underbrace{00\ldots0}_{t \mbox{ \scriptsize times}} \underbrace{11\ldots1}_{t \mbox{ \scriptsize times}}
\]
as claimed.

Now assume that $l\geq 2$ and the assertion is true for any parallelogram polyomino with $l-1$ maximal rectangles. Let the size of $R_0$ be $s \times t$. Assume that $t<s$, and the case when $t>s$ can be discussed in a similar way. The assumption that $\Pc$ has the $S$-property together with Lemma~\ref{lem:shorten} and \ref{lem:singles} shows that the single square $S$ of $R_0$ has size $t\times t$. Consider the parallelogram polyomino $\PP'= \PP\setminus R_0$ given by some paths $(\SS_1', \SS_2')$. Then $\PP'$ has the $S$-property, too. We observe that since $S$ is single square, the path $\SS_1$ is of the form
\[
\underbrace{11\ldots1}_{t \mbox{ \scriptsize times}}\underbrace{00\ldots0}_{t \mbox{ \scriptsize times}}u(\PP').
\]
 where $u(\PP')$ is the binary representation of $\SS'_1$. By using the inductive hypothesis on $\PP'$ we conclude that $\PP$ satisfies the condition (1). Moreover, again by using the inductive hypothesis on $\PP'$, we see that $\SS_2'$ satisfies condition (2). That is, the binary representation $\ell(\PP')$ of $\SS_2'$ starts with a block of $s-t$ $0$s followed by a block of $s-t$ $1$s, in particular
\[
\underbrace{00\ldots0}_{s-t \mbox{ \scriptsize times}} \beta.
\]
Hence $\ell(\PP)$ is given by
\[
\underbrace{00\ldots0}_{s \mbox{ \scriptsize times}} \underbrace{11\ldots1}_{t \mbox{ \scriptsize times}}\beta
\]
This shows that $\SS_2$ satisfies the condition (2).

To prove the converse, assume that $\PP$ satisfies the conditions (1) and (2). We need to show that $\PP$ has the S-property. We proceed by induction on the total number $e\geq 2$ of maximal NE-blocks in $\SS_1$ and $\SS_2$. In other words, we apply the induction on the total number of corners in $\SS_1$ and $\SS_2$. 

For $e=2$, from the conditions (1) and (2) we get that in $\SS_1$ (resp. $\SS_2$) the maximal NE-block has size $2t$ for some $t \in \NN$. More precisely, $\SS_1$ (resp. $\SS_2$) has a maximal block of $t$ north-steps (resp. east-steps) followed by a block of $t$ east-steps (resp. north-steps). Then the binary representations of $\SS_1$ and $\SS_2$ are
\[
u(\PP): \underbrace{11\ldots1}_{t \mbox{ \scriptsize times}} \underbrace{00\ldots0}_{t \mbox{ \scriptsize times}} \mbox{ and } \ell(\PP): \underbrace{00\ldots0}_{t \mbox{ \scriptsize times}} \underbrace{11\ldots1}_{t \mbox{ \scriptsize times}}
\]
and the polyomino is a square. 

Now, let $e\geq 3$ and assume that any parallelogram polyomino having a total number of maximal NE-blocks equal to $e-1$ has the $S$-property. Let $\PP'=\PP \setminus R_0$.  It follows from Lemma~\ref{lem:singles} that $\PP'$ is a parallelogram polyomino. Set $\PP'=(\SS_1', \SS_2')$. To prove that $\PP$ has the $S$-property, it is enough to show that $R_0$ has a single square and that $\PP'$ has the $S$-property. In particular, we prove that $\SS_1'$ and $\SS_2'$ satisfy conditions (1) and (2), respectively. Then the conclusion will follow by using inductive hypothesis on $\PP'$ and the existence of single square in $R_0$. 

If $R_0$ has size $s\times t$ with $t <s$, then $u(\PP)$ begins with a maximal block of $t$ $1$s and by using condition (1), there is a maximal block of $t$ $0s$ following it. Therefore, $u(\PP)$ is of the following form
\[
\underbrace{11\ldots1}_{t \mbox{ \scriptsize times}}\underbrace{00\ldots0}_{t \mbox{ \scriptsize times}}u(\PP').
\]
This shows that $R_0$ has a single square of size $t \times t$ and $\SS'_1$ satisfies the condition (1). Moreover, by using the assumption that $\PP$ satisfies condition (2), we obtain that $\ell(\PP)$ starts with a maximal block of $s$ 0s followed by a maximal block of $s$ 1s. We write 
\[
\underbrace{00\ldots0}_{s \mbox{ \scriptsize times}} \underbrace{11\ldots1}_{s \mbox{ \scriptsize times}}\beta.
\]
 where $\beta$ is binary sequence consistent with condition (2). Then $\ell(\PP')$ takes the following form
\[
\underbrace{00\ldots0}_{s-t \mbox{ \scriptsize times}}\ \ \underbrace{11\ldots1}_{s-t \mbox{ \scriptsize times}}\beta.
\]
which shows that $\PP'$ satisfies the condition (2).  Moreover, the total number of maximal NE-blocks in $\PP'$ is $e-1$ (because one maximal NE-block is at the beginning of $\SS_1$). By using the inductive hypothesis, we conclude that $\PP'$ has the $S$-property. Then, it follows that $\PP$ has the $S$-property as well.
\end{proof}

With the help of Theorem~\ref{thm:GorSprop}, to be able to describe Motzkin paths associated with Gorenstein parallelogram polyominoes, it is enough to see the impact of conditions (1) and (2) of Theorem~\ref{thm:motpath} on the associated Motzkin paths. Let $\PP=(\SS_1, \SS_2)$ be a Gorenstein parallelogram polyomino with associated Motzkin path $\MM_\PP$. Note that in $\SS_1$, a maximal block of length $k$ of consecutive north steps corresponds to a combination of $k$ rise and $\alpha$-colored horizontal steps in $\MM_\PP$. Indeed, this combination of rise and $\alpha$-colored horizontal steps in $\MM_\PP$ is maximal in a sense that it is followed by either a fall or a $\beta$-colored horizontal step. Similarly, a maximal block of length $k$ of consecutive east steps corresponds to a maximal block of a combination of $k$ fall and $\beta$-colored horizontal steps in $\MM_\PP$. Hence, the condition (1) of Theorem~\ref{thm:motpath} translates as: in $\MM_\PP$ each maximal block of a combination of $k$ rise and $\alpha$-colored horizontal steps must be followed by a maximal block of a combination of $k$ fall and $\beta$-colored horizontal steps. 

To translate condition (2) for $\MM_{\PP}$, we consider the reflection of $\MM_{\PP}$ through the $x$-axis. We denote this reflection by $\overline{\MM_\PP}$. The reflection $\overline{\MM_\PP}$ corresponds to the coding given in (\ref{eq:coding}) applied to the matrix that contains $\ell(\PP)$ as first row and $u(\PP)$ as the second row. Then the condition (2) of Theorem~\ref{thm:motpath} translates as: in $\overline{\MM_\PP}$ each maximal block of a combination of $k$ fall and $\beta$-colored horizontal steps must be followed by a maximal block of a combination of $k$ rise and $\alpha$-colored horizontal steps. We formulate this discussion in the following corollary.

\begin{Corollary}\label{cor:Motz}
Let $\PP=(\SS_1,\SS_2)$ be a parallelogram polyomino with associated Motzkin path $\MM_\PP$. Let $\overline{\MM_\PP}$ be the reflection of $\MM_\PP$ through $x$-axis.  Then $\PP$ is Gorenstein if and only if  the following conditions hold: 
\begin{enumerate}
    \item in $\MM_\PP$ each maximal block of a combination of $k$ rise and $\alpha$-colored horizontal steps must be followed by a maximal block of a combination of $k$ fall and $\beta$-colored horizontal steps;
    \item in $\overline{\MM_\PP}$ each maximal block of a combination of $k$ fall and $\beta$-colored horizontal steps must be followed by a maximal block of a combination of $k$ rise and $\alpha$-colored horizontal steps.
\end{enumerate}
\end{Corollary}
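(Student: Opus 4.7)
The plan is to derive this corollary directly from Theorem~\ref{thm:GorSprop} and Theorem~\ref{thm:motpath} by transporting the block conditions on $\SS_1$ and $\SS_2$ through the coding (\ref{eq:coding}) that defines $\MM_\PP$. By Theorem~\ref{thm:GorSprop}, $\PP$ is Gorenstein if and only if it has the $S$-property, and by Theorem~\ref{thm:motpath} the latter is equivalent to condition (1) on $\SS_1$ together with condition (2) on $\SS_2$. So the whole task reduces to building a dictionary between rows of the defining matrix of $\PP$ and the four step types in $\MM_\PP$ and $\overline{\MM_\PP}$.

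For condition (1), I would note that under (\ref{eq:coding}) the top row of the matrix determines precisely the rise/$\alpha$-horizontal vs.\ fall/$\beta$-horizontal dichotomy in $\MM_\PP$: a top-row $1$ (a north step of $\SS_1$) produces either a rise $\binom{1}{0}$ or an $\alpha$-horizontal $\binom{1}{1}$, while a top-row $0$ (an east step of $\SS_1$) produces either a fall $\binom{0}{1}$ or a $\beta$-horizontal $\binom{0}{0}$. Because the coding acts column-by-column, a maximal run of $k$ consecutive north steps in $\SS_1$ corresponds bijectively to a maximal run of $k$ consecutive rise/$\alpha$-horizontal steps in $\MM_\PP$, and the following maximal east run maps to a maximal fall/$\beta$-horizontal run of the same length. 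Hence condition (1) of Theorem~\ref{thm:motpath} is exactly condition (1) of the corollary.

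For condition (2), I would invoke the description of $\overline{\MM_\PP}$ as (\ref{eq:coding}) applied to the matrix with rows swapped, so that $\ell(\PP)$ now plays the role of the top row. Geometrically this coincides with reflection through the $x$-axis, since swapping rows sends $\binom{1}{0}\leftrightarrow\binom{0}{1}$ (rise $\leftrightarrow$ fall) and fixes $\binom{1}{1},\binom{0}{0}$. Repeating the argument of the previous paragraph with this swapped matrix, maximal east runs of $\SS_2$ correspond to maximal fall/$\beta$-horizontal runs of $\overline{\MM_\PP}$ and maximal north runs of $\SS_2$ correspond to maximal rise/$\alpha$-horizontal runs of $\overline{\MM_\PP}$; this rewrites condition (2) of Theorem~\ref{thm:motpath} as condition (2) of the corollary.

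The only point requiring care is matching the two notions of ``maximal block'' on each side, but this is immediate from the positional bijectivity of (\ref{eq:coding}): two consecutive columns with the same top-row value translate to two consecutive steps in $\MM_\PP$ belonging to the same rise/$\alpha$ (or fall/$\beta$) family, so maximality is preserved in both directions. No substantial obstacle arises; the corollary is essentially the formalization of the translation already sketched in the paragraph preceding its statement.
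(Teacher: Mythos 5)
Your proposal is correct and follows essentially the same route as the paper: reduce to the $S$-property via Theorem~\ref{thm:GorSprop}, apply Theorem~\ref{thm:motpath}, and translate the block conditions on $\SS_1$ and $\SS_2$ through the column-by-column coding (\ref{eq:coding}), with $\overline{\MM_\PP}$ realized as the coding of the row-swapped matrix. The dictionary between top-row entries and the rise/$\alpha$ versus fall/$\beta$ dichotomy, together with the preservation of maximality, is exactly the argument the paper gives in the paragraph preceding the corollary.
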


We give an illustration of Corollary~\ref{cor:Motz} in the following example. 
\begin{Example}
The Figure~\ref{fig:GorMotz} shows a Gorenstein parallelogram polyomino. The associated Motzkin path $\MM_\PP$ is shown on the left side and its reflection through $x$-axis is shown on the right side. The Motzkin path $\MM_\PP$ and its reflection satisfy the conditions (1) and (2) of Corollary~\ref{cor:Motz}.
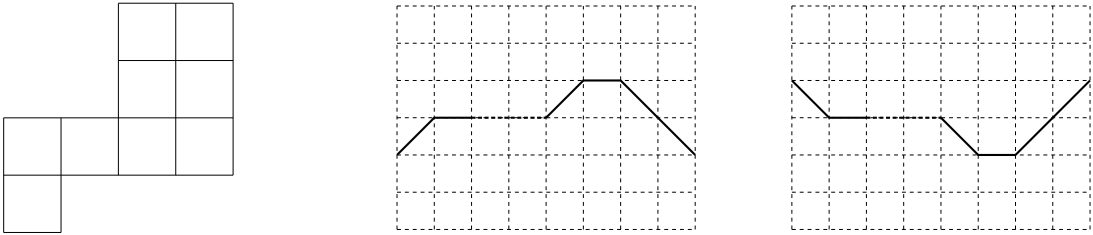
\begin{figure}[H]
    \centering
    \begin{subfigure}{0.3 \textwidth}
    \resizebox{0.7\textwidth}{!}{
    \begin{tikzpicture}
  
   \draw (0,0)--(1,0);
    \draw (0,1)--(4,1);
    \draw (0,2)--(4,2);
    \draw (2,3)--(4,3);
    \draw (2,4)--(4,4);

    \draw (0,0)--(0,2);
    \draw (1,0)--(1,2);
    \draw (2,1)--(2,4);
    \draw (3,1)--(3,4);
    \draw (4,1)--(4,4);
    \end{tikzpicture}}

    \end{subfigure}\hfill%
    \begin{subfigure}{0.3 \textwidth}
     \resizebox{0.9\textwidth}{!}{
    \begin{tikzpicture}
     \draw[dashed] (0,0)--(8,0);
    \draw[dashed] (0,1)--(8,1);
    \draw[dashed] (0,2)--(8,2);
    \draw[dashed] (0,3)--(8,3);
    \draw[dashed] (0,4)--(8,4);
    \draw[dashed] (0,5)--(8,5);
    \draw[dashed] (0,6)--(8,6);
    
    \draw[dashed] (0,0)--(0,6);
    \draw[dashed] (1,0)--(1,6);
    \draw[dashed] (2,0)--(2,6);
    \draw[dashed] (3,0)--(3,6);
    \draw[dashed] (4,0)--(4,6);
    \draw[dashed] (5,0)--(5,6);
    \draw[dashed] (6,0)--(6,6);
    \draw[dashed] (7,0)--(7,6);
    \draw[dashed] (8,0)--(8,6);
    
    \draw[ultra thick] (0,2)--(1,3)--(2,3);
    \draw[ultra thick, densely dashed] (2,3)--(4,3);
    \draw[ultra thick] (4,3)--(5,4)--(6,4)--(8,2);
    
    \end{tikzpicture}}
    \end{subfigure}\hfill%
    \begin{subfigure}{0.3 \textwidth}
     \resizebox{0.9\textwidth}{!}{
    \begin{tikzpicture}
     \draw[dashed] (0,0)--(8,0);
    \draw[dashed] (0,1)--(8,1);
    \draw[dashed] (0,2)--(8,2);
    \draw[dashed] (0,3)--(8,3);
    \draw[dashed] (0,4)--(8,4);
    \draw[dashed] (0,5)--(8,5);
    \draw[dashed] (0,6)--(8,6);
    
    \draw[dashed] (0,0)--(0,6);
    \draw[dashed] (1,0)--(1,6);
    \draw[dashed] (2,0)--(2,6);
    \draw[dashed] (3,0)--(3,6);
    \draw[dashed] (4,0)--(4,6);
    \draw[dashed] (5,0)--(5,6);
    \draw[dashed] (6,0)--(6,6);
    \draw[dashed] (7,0)--(7,6);
    \draw[dashed] (8,0)--(8,6);
    
    \draw[ultra thick] (0,3+1)--(1,2+1)--(2,2+1);
    \draw[ultra thick, densely dashed] (2,2+1)--(4,2+1);
    \draw[ultra thick] (4,2+1)--(5,1+1)--(6,1+1)--(8,3+1);
    
    \end{tikzpicture}}
    \end{subfigure}\hfill%
    \caption{A Gorenstein parallelogram polyomino satisfying conditions (1) and (2) of Corollary \ref{cor:Motz}  }
    \label{fig:GorMotz}
\end{figure}

The Figure~\ref{fig:nonGorMotz} shows a non-Gorenstein parallelogram polyomino. The associated Motzkin path $\MM_\PP$ is shown on the left side and its reflection through $x$-axis is shown on the right side. The Motzkin path $\MM_\PP$ fails the condition (1) of Corollary~\ref{cor:Motz}. However, its reflection satisfies the condition (2) of Corollary~\ref{cor:Motz}.
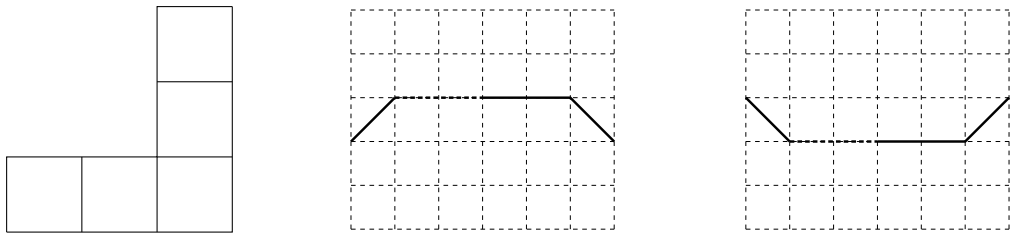
\begin{figure}[H] 
    \centering
    \hfill
    \begin{subfigure}{0.3 \textwidth}
    \begin{tikzpicture}
    \draw (0,0)--(3,0)--(3,3);
    \draw (0,0)--(0,1)--(2,1)--(2,3)--(3,3);
    \draw (1,0)--(1,1);\draw (2,0)--(2,1);\draw (2,1)--(3,1);\draw (2,2)--(3,2);
    
    
    \end{tikzpicture}

    \end{subfigure}%
    \begin{subfigure}{0.3 \textwidth}
     \resizebox{0.8\textwidth}{!}{
    \begin{tikzpicture}
     \draw[dashed] (0,0)--(6,0);
    \draw[dashed] (0,1)--(6,1);
    \draw[dashed] (0,2)--(6,2);
    \draw[dashed] (0,3)--(6,3);
    \draw[dashed] (0,4)--(6,4);
    \draw[dashed] (0,5)--(6,5);

    \draw[dashed] (0,0)--(0,5);
    \draw[dashed] (1,0)--(1,5);
    \draw[dashed] (2,0)--(2,5);
    \draw[dashed] (3,0)--(3,5);
    \draw[dashed] (4,0)--(4,5);
    \draw[dashed] (5,0)--(5,5);
    \draw[dashed] (6,0)--(6,5);
    
    \draw[ultra thick] (0,2)--(1,3);
    \draw[ultra thick, densely dashed] (1,3)--(3,3);
    \draw[ultra thick] (3,3)--(5,3)--(6,2);
    
    \end{tikzpicture}}
    \end{subfigure}\hfill%
    \begin{subfigure}{0.3 \textwidth}
     \resizebox{0.8\textwidth}{!}{
    \begin{tikzpicture}
     \draw[dashed] (0,0)--(6,0);
    \draw[dashed] (0,1)--(6,1);
    \draw[dashed] (0,2)--(6,2);
    \draw[dashed] (0,3)--(6,3);
    \draw[dashed] (0,4)--(6,4);
    \draw[dashed] (0,5)--(6,5);

    \draw[dashed] (0,0)--(0,5);
    \draw[dashed] (1,0)--(1,5);
    \draw[dashed] (2,0)--(2,5);
    \draw[dashed] (3,0)--(3,5);
    \draw[dashed] (4,0)--(4,5);
    \draw[dashed] (5,0)--(5,5);
    \draw[dashed] (6,0)--(6,5);
    
    \draw[ultra thick] (0,3)--(1,2);
    \draw[ultra thick, densely dashed] (1,2)--(3,2);
    \draw[ultra thick] (3,2)--(5,2)--(6,3);
    \end{tikzpicture}}
    \end{subfigure}\hfill%
    \caption{A non-Gorenstein parallelogram polyomino satisfying condition (2) of Corollary \ref{cor:Motz}  }
    \label{fig:nonGorMotz}
\end{figure}

\end{Example}

\noindent \textbf{Data availability statement:} The source code to produce the datasets analysed during the current study are available in the second author's repository, \cite{QRR}.


\begin{thebibliography}{}
\bibitem{ABG}{J.C. Aval, F. Bergeron, A. Garsia},
\textit{Combinatorics of labelled parallelogram polyominoes},  J. Combin. Theory Ser. A, 132 (2015), 32--57.


\bibitem{BGS} { A. Bj\"orner, A.M. Garsia, R. Stanley} \textit{An Introduction to Cohen-Macaulay Partially Ordered Sets},
In: Rival I. (eds) Ordered Sets. NATO Advanced Study Institutes Series (Series C Ñ Mathematical and Physical Sciences), vol 83. (1982), Springer, Dordrecht. 


\bibitem{B} {G. Birkhoff}, 
\textit{Lattice Theory}, 3rd. ed., Amer. Math. Soc. Colloq. Publ. No. 25, Amer. Math. Soc. Providence, R. L, 1967.

\bibitem{CN}{A. Corso, U. Nagel}, 
\textit{Monomial and toric ideals associated to Ferrers graphs}, 
\newblock Trans. Amer. Math. Soc. 361, 1371--1395, (2009).

\bibitem{DV}{M. P. Delest, G. Viennot},  \textit{Algebraic Languages and Polyominoes Enumeration}, Theoret. Comput. Sci. 34, (1984), 169--206.

\bibitem{DNPR}{A. Del Lungo, M. Nivat, R. Pinzani,  S. Rinaldi},
\textit{A bijection for the total area of parallelogram polyominoes},
Discrete Appl. Math. 144 (3), (2004), 291--302.

\bibitem{EHQR}{V. Ene, J. Herzog, A. A. Qureshi, F. Romeo},
\textit{Regularity and the Gorenstein property of $L$-convex polyominoes},
\newblock Electron. J. Combin. 28 (1) (2021), 1--23.

\bibitem{EQR} V. Ene, A. A. Qureshi, A. Rauf, \textit {Regularity of join-meet ideals of distributive lattices}, Electron. J. Combin. 20 (3) (2013), P-20.

\bibitem{GG}{C. D. Godsil, I. Gutman},
\textit{Some remarks on matching polynomials and its zeros}, 
\newblock Croatica Chemica Acta, 54, 53--59, (1981).

\bibitem{Go}{S. W. Golomb},
\newblock \textit{Polyominoes, puzzles, patterns, problems, and packagings}, Second edition,
\newblock Princeton University press, 1994.

\bibitem{M2}{D. Grayson, M. Stillman},
\newblock\textit{Macaulay2, a software system for research in algebraic geometry}, \url{http://www.math.uiuc.edu/Macaulay2/}.

\bibitem{HHO}{J. Herzog, T. Hibi, H. Ohsugi}, 
\newblock \textit{Binomial ideals}, Graduate Texts in Math. 279, Springer, Cham,  (2018). 

\bibitem{HQS}{J. Herzog, A.A. Qureshi, A. Shikama},
\textit{Gr\"obner bases of balanced polyominoes},
Math. Nachr., 288, 775--783, (2015).

\bibitem{HM}{J. Herzog,  S. Saeedi  Madani},
\textit{The coordinate ring of a simple polyomino},
\newblock  Illinois J. Math.,   58,  981--995, (2014).

\bibitem{H}{T. Hibi}, \textit{Distributive lattices, affine semigroup rings and algebras with straightening laws}, 
\newblock Commutative Algebra and Combinatorics (M. Nagata and H. Matsumura, Eds.), Adv. Stud.
Pure Math. 11, North Holland, Amsterdam, 1987, pp. 93--109.

\bibitem{KV}{M. Kummini, D. Veer},
\newblock \textit{The $h$-polynomial and the rook polynomial of some polyominoes},
\newblock preprint arXiv:2110.14905.


 \bibitem{MRR}{C. Mascia, G. Rinaldo, F. Romeo},
\newblock \textit{Primality of multiply connected polyominoes}, accepted in Illinois J. Math, (2020).

\bibitem{MRRsite}{C. Mascia, G. Rinaldo, F. Romeo},
\newblock \textit{Primality of polyominoes},
 \url{http://www.giancarlorinaldo.it/polyominoes-primality.html}
 
\bibitem{Qu}{A. A. Qureshi},
\textit{Ideals generated by 2-minors, collections of cells and stack polyominoes},
\newblock J. Algebra, 357, 279--303, (2012).

\bibitem{QRR}{A. A. Qureshi, G. Rinaldo, F. Romeo},
\newblock \textit{Hilbert series of simple polyominoes}, \\
\url{http://www.giancarlorinaldo.it/hilbert-series-of-simple-polyominoes.html}
 
\bibitem{QSS}{A. A. Qureshi, T.  Shibuta, A.  Shikama} 
\textit{Simple polyominoes are prime},
\newblock J.  Commut.  Alg.,  9,  413--422, (2017). 

\bibitem{RR}{G. Rinaldo, and F. Romeo}, 
\textit{Hilbert Series of simple thin polyominoes},
\newblock J. Algebr. Comb., 54, 607--624 (2021).

\bibitem{Ri}{J. Riordan},
\textit{An introduction to combinatorial analysis},
\newblock Wiley Publications in Mathematical Statistics. John Wiley \& Sons, Inc., New York; Chapman \& Hall, Ltd., London, (1958).

\bibitem{Vi}{R. Villarreal}, 
\newblock \textit{Monomial algebras}, Second edition,
\newblock Taylor and Francis, CRC Press, (2015).
\end{thebibliography}
\end{document}